\documentclass[twocolumn]{article}
\pdfoutput=1

\textwidth=166mm \textheight=240mm \topmargin=-18mm
\oddsidemargin=-0.1cm \evensidemargin=-0.1cm
\pagestyle{plain}

\pagenumbering{arabic}
\newcommand{\be}{\begin{equation}}
\newcommand{\ee}{\end{equation}}

\usepackage[margin=2cm]{geometry}

\usepackage{bm}
\usepackage{graphicx} 
\usepackage{epstopdf}
\usepackage[linesnumbered,ruled,vlined]{algorithm2e}

\usepackage{enumitem,calc}
\usepackage[colorlinks=false, hidelinks]{hyperref}
\hypersetup{pdfauthor=author}
\usepackage{subfigure}
\usepackage{tikz,pgfplots}
\usepackage{tkz-euclide}
\usepackage{datatool} 
\usepackage{calc}
\usepackage{xsavebox} 
\usetikzlibrary{3d,positioning}
\pgfplotsset{compat = newest}
\usetikzlibrary{arrows,calc,positioning,shadows,shapes,fit}
\usetikzlibrary{pgfplots.statistics, pgfplots.colorbrewer}

\usepackage{amsmath,amssymb,amsthm}
\usepackage{mathrsfs}
\usepackage{verbatim}
\usepackage{balance}

\newtheorem{remark}{Remark}
\newtheorem{definition}{Definition}
\newtheorem{theorem}{Theorem}
\newtheorem{lemma}{Lemma}

\def\mytmpindentfour{4.0ex}

\let\today\relax
\makeatletter
\def\ps@pprintTitle{%
    \let\@oddhead\@empty
    \let\@evenhead\@empty
    \def\@oddfoot{\footnotesize\itshape
         {~}
         \hfill\today}%
    \let\@evenfoot\@oddfoot
    }
\makeatother

\makeatletter 
\def\l@section#1#2{%
  \ifnum \c@tocdepth > \m@ne
    \addpenalty{-\@highpenalty}%
    \vskip 0.3em \@plus\p@
    \setlength\@tempdima{2.5em}%
    \begingroup
      \parindent \z@ \rightskip \@pnumwidth
      \parfillskip -\@pnumwidth
      \leavevmode \normalfont
      \advance\leftskip\@tempdima
      \hskip -\leftskip
      {#1}\nobreak
      \hfil \nobreak
      \hb@xt@\@pnumwidth{\hss #2}\par
    \endgroup
  \fi} 
\makeatother





%

\begin{document}

\title{\bf \Large An Evolution Process for Effective Network Topological Compression}

\author{Jian-Hui Li$^{1,2,3}$, Zu-Guo Yu$^{1, 2}$\thanks{Corresponding author, email: yuzg@xtu.edu.cn}, and Yu-Chu Tian$^{3}$\thanks{Corresponding author, email: y.tian@qut.edu.au} \\
{\small $^{1}$National Center for Applied Mathematics in Hunan, Xiangtan University, Hunan 411105, China.}\\
{\small $^{2}$Key Laboratory of Intelligent Computing and Information Processing of the Ministry of Education}\\
{\small  of China, Xiangtan University, Hunan 411105, China. }\\
{\small $^{3}$School of Computer Sciences, Queensland University of Technology, GPO Box 2434,}\\
{\small Brisbane, QLD 4001, Australia.}
}
\date{}
\maketitle

\begin{abstract}
Network dynamics offers critical insights into the behavior and evolution of complex systems. Here, we focus on the topological dynamics of networks to explore a unique process for reducing the average distance: topological compression. The compression process essentially involves a series of network topological transformations, which can generally be achieved through rewiring technique. This paper proposes an evolutionary mechanism for achieving effective network topological compression and experimentally validates its performance across various synthetic networks. These results establish a paradigm in the field of network topological compression dynamics.
\end{abstract}


\section{Introduction}

~~~~Complex networks not only hold significant potential for diverse applications but also play a crucial role in advancing interdisciplinary research. Despite significant advances in the field, a comprehensive understanding of network structure and dynamics remains incomplete, highlighting the requirement for further investigation into key issues~\cite{boguna2021network,SSZ2023}. 
Consequently, investigations from multiple perspectives are necessary. Among them, studying node dynamics~\cite{ghavasieh2024diversity,barzon2024unraveling} provides an effective approach to unraveling the underlying mechanisms.

Additionally, the dynamics of network topology are also crucial for understanding complex networks. Various dynamical transformations can reduce the average distance ($\bar{D}$) of networks, which we refer to as $\bar{D}$-reducing transformations. As a foundational model, the Watts-Strogatz (WS) model~\cite{WS1998} generates networks transitioning from regular to random or small-world structures controlled by the rewiring probability $P_{rew}$. A small-world network emerges when $P_{rew}$ takes on relatively small values. The transformation in the WS network model leads to a qualitative change in network topology, affecting both the network type and its degree distribution. Ref.~\cite{soriano2018smart} also proposed a $\bar{D}$-reducing transformation for a network by combining rewiring and edge addition. This process can induce fundamental modification of the network topology, as the network may even become fully connected when a sufficient number of edges are added.

Network transformations that preserve the ensemble to which a network belongs, as defined in previous work~\cite{doyle2005The}, can offer valuable insights into network structure without altering its statistical class.
Previous studies~\cite{gallos2012Asmall,doyle2005The,ye2013multi} have introduced network topological transformations that preserve the degree distribution of the network. These approaches, based on network rewiring technology, achieve $\bar{D}$-reduction in the network.

We investigate a new type of network topological transformation process\textemdash topological compression. The topological compression is abstracted as a dynamic process to tighten them with invariance in the involved nodes, the number of edges, network conncetivity, and degree distribution. In other words, topological compression is a $\bar{D}$-reduction transformation that preserves the network ensemble. Although the random rewiring method proposed in Ref.~\cite{LYA2023} can achieve topology compression of numerous networks. However, the method can be regarded as a preliminary topology compression process, and its effectiveness requires further refinement.

To develop a more efficient compression process, this paper investigates an effective compression evolution of network topology. A notable limitation of general rewiring approaches is their limited effectiveness in the topological compression of specialized network. Intuitively, this is because existing methods are less effective at compressing scale-free networks. This is a further significant motivation for investigating effective topological compression: the goal is to achieve maximal network compression, particularly in challenging cases such as scale-free networks.

The remainder of the paper is structured as follows. Section II covers the theoretical foundations of topological compression process. 
Section III discusses the algorithmic details and computational complexity.
Section IV presents the experimental results.  The discussion and conclusions contain in Section V.

\section{Theoretical development of network topological compression}

Throughout our theoretical developments, we consider complex networks that can be described by a connected simple graph. 
A \emph{simple graph} is an unweighted and undirected graph that has neither multiple edges between any two vertices nor self-loops for any vertex.
Denote $G(V,E)$ as a connected simple graph, where $V$ and $E$ are the sets of vertices and edges of $G$, respectively. Let $|V|$  and $|E|$ represent the sizes of $V$ and $E$, respectively. The degree of vertex $v\in V$ is denoted by $k(v)$. We also denote $(v_i,v_j)$ and $e(v_i,v_j)$ as a pair of vertices and the edge that joins the two vertices, respectively. For simplicity, $e(\cdot,\cdot)$ is denoted as $e$ when no confusion arises. Let $G-e$ and $G+e$ represent the graphs after an edge $e$ is removed and added, respectively. Furthermore, use $d_G(v_i,v_j)$ and $B_G(e)$ to represent the distance of vertex pair $(v_i,v_j)$ and the edge betweenness of edge $e$ in $G$, respectively. As the discussions for graph are for complex networks, the terms ``vertex" and ``nodes" are used interchangeably. 

\subsection{Preliminaries}

Let us discuss some preliminaries below, which are followed by detailed theoretical developments towards network compression. 


\begin{definition}[Path, Ref. \cite{REIN2017}] 
A path is a non-empty graph $P(V_P,E_P)$ of the form 
\begin{align*}
\left\{%
    \begin{array}{rcl}
    V_P &=& \{v_1, \cdots, v_{|V_P|}\},\\
    E_P &=& \{e(v_1,v_2),\cdots, e(v_{|V_P|-1},v_{|V_P|})\},
    \end{array}
\right.
\end{align*}
where nodes in $V_P$ are all distinct. It is often denoted as a sequence of vertices, e.g., $v_1\cdots v_{|V_P|}$.  
\label{def:path}
\end{definition}

\begin{lemma}[Ref. \cite{REIN2017}] 
Let $G(V,E)$ be a simple graph, then $\sum_{v \in V} k(v) = 2|E|$.
\label{lem:sum_k(v)}
\end{lemma}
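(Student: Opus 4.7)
The plan is to prove the identity by a double-counting argument on the set of vertex-edge incidences. I would define an \emph{incidence} as a pair $(v,e)$ with $v\in V$, $e\in E$, and $v$ an endpoint of $e$, and then evaluate the cardinality of the set of incidences in two complementary ways. This sidesteps any need for induction and makes the role of the ``simple graph'' hypothesis transparent.

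First, I would sum incidences by vertices. By the definition of $k(v)$, the number of edges of $G$ that have $v$ as an endpoint is exactly $k(v)$, so the total incidence count equals $\sum_{v\in V} k(v)$. Next, I would sum by edges. Because $G$ is a simple graph, the edge set $E$ contains no self-loops, so every $e\in E$ is of the form $e(v_i,v_j)$ with $v_i\neq v_j$, contributing exactly two incidences (one at each endpoint). The total incidence count is therefore $2|E|$. Equating the two expressions gives $\sum_{v\in V} k(v) = 2|E|$, which is the desired identity.

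The only delicate point, and the one I would highlight, is the use of the simple-graph hypothesis: it guarantees that every edge has two \emph{distinct} endpoints, which is exactly what justifies the factor $2$ when counting by edges. (A self-loop at $v$ would be a single edge producing only one incidence while conventionally contributing $2$ to $k(v)$, so the identity would still hold under the usual loop-counting convention, but the clean one-to-two correspondence we use here would have to be argued separately.) Since $G$ is assumed simple per Definition of the setting, no such case arises and the argument is complete. I do not anticipate a real obstacle; this lemma is essentially the classical handshaking identity, cited from Ref.~\cite{REIN2017}, and its proof reduces to verifying the incidence bookkeeping above.
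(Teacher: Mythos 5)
Your double-counting argument over vertex--edge incidences is correct and complete; this is the standard proof of the handshaking identity. The paper itself states this lemma as a cited result from Ref.~\cite{REIN2017} and provides no proof of its own, so there is nothing to compare against --- your argument, including the observation that simplicity (no self-loops) is what makes each edge contribute exactly two incidences, is exactly the right justification.
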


From Definition \ref{def:path} and Lemma \ref{lem:sum_k(v)}, we can derive the following lemma.

\begin{lemma}[Path subgraph] Let $G(V,E)$ be a simple graph. For $V_P \subseteq V$ and $E_P \subseteq E$, a subgraph $G_P(V_P, E_P)$ of $G$ is a path if and only if the following conditions hold:
 (i) $G_P(V_P,E_P)$ is a connected graph, 
 (ii) $|V_P| = |E_P|+1$, and
(iii) $k(v_i) \leq 2~\forall v_i \in V_P$. 
\label{lem:path_subgraph}
\end{lemma}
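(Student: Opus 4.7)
The plan is to prove the biconditional by handling the two directions separately, with the reverse implication carrying the real content. The forward direction is a direct check against Definition~\ref{def:path}: if $G_P$ has the stated form $V_P=\{v_1,\ldots,v_{|V_P|}\}$ and $E_P=\{e(v_1,v_2),\ldots,e(v_{|V_P|-1},v_{|V_P|})\}$, then the consecutive edges witness connectivity in (i); the cardinalities satisfy $|E_P|=|V_P|-1$, giving (ii); and the two endpoints $v_1,v_{|V_P|}$ have degree $1$ while all interior vertices have degree $2$, giving (iii).

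For the reverse direction I assume (i)--(iii); the case $|V_P|=1$ is degenerate, so I take $|V_P|\geq 2$. The first step is to pin down the degree sequence. Connectivity forces $k(v)\geq 1$ for every $v\in V_P$, and together with (iii) this gives $k(v)\in\{1,2\}$. Writing $n_j$ for the number of vertices of degree $j$ in $G_P$ and combining Lemma~\ref{lem:sum_k(v)} applied to $G_P$ with condition (ii),
\[
n_1+n_2=|V_P|, \qquad n_1+2n_2 = 2|E_P| = 2|V_P|-2,
\]
which forces $n_1=2$ and $n_2=|V_P|-2$. I would then run a standard walk argument: start at a degree-$1$ vertex $v_1$, move to its unique neighbor $v_2$, and at each subsequent (necessarily degree-$2$) vertex $v_i$ advance to the neighbor distinct from $v_{i-1}$. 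Any revisit $v_{i+1}=v_j$ would force $k(v_j)$ to exceed the bound in (iii), so the walk does not repeat vertices and must terminate at the other degree-$1$ vertex, producing a subgraph $v_1v_2\cdots v_m$ of exactly the form in Definition~\ref{def:path}.

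The step I expect to be trickiest to write cleanly is verifying that this walk exhausts all of $V_P$ and $E_P$ rather than leaving an extra component behind. The idea is that every walk vertex already has its full degree quota consumed by walk edges (one for each endpoint, two for each interior vertex), so no edge of $G_P$ can attach an unvisited vertex to the walk; but an unvisited vertex would then contradict the connectivity hypothesis (i). Edge-counting then gives $m=|V_P|$ and $m-1=|E_P|$, confirming that the walk is all of $G_P$. A slicker route would invoke ``a connected graph with $|V|-1$ edges is a tree'' together with ``a tree of maximum degree at most $2$ is a path,'' but since trees have not been introduced in the paper, the self-contained walk argument seems more in keeping with the surrounding exposition.
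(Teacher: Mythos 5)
Your proof is correct and follows essentially the same route as the paper's: a direct check for the forward direction, then degree counting via Lemma~\ref{lem:sum_k(v)} and condition (ii) to conclude there are exactly two degree-one vertices, followed by the greedy walk starting from one of them. You are in fact slightly more careful than the paper at the last step --- the paper simply asserts that ``iterating over all nodes and edges'' yields the full vertex sequence, whereas you explicitly invoke connectivity (i) to rule out a leftover component not reached by the walk.
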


\begin{proof} 
If $G_P(V_P,E_P)$ is a path, then the length of the path is $|E_P|$ and $G_P$ can be represented as a sequence of distinct nodes $v_1v_2\cdots v_{|V_P|}$ according to Definition \ref{def:path}. Clearly, conditions (i) through (iii) in Lemma~\ref{lem:path_subgraph} hold. 

Conversely, if conditions (i) through (iii) hold, it follows from condition (i) that $k(v_i) \geq 1~\forall v_i \in V_P$. From condition (iii), we have either $k(v_i)=1$ or $k(v_i)=2~\forall v_i \in V_P$. Let $q$ be the number of nodes with degree 1.  According to Lemma~\ref{lem:sum_k(v)} and condition (ii), we have 
\begin{equation}
\sum_{v\in V_P} k(v) = 2 |V_P| - q = 2 |V_P| - 2.
\label{eqn:sumv}
\end{equation}
This indicates that $q=2$. Let $V_P = \{v_{P,1}$, $v_{P,2}$, $\cdots$, $v_{P,|V_P|}\}$. Without loss of generality, assume $k(v_{P,1}) = k(v_{P,2}) = 1$. Thus, $k(v)=2$ $\forall v \in V_P - \{v_{P,1}, v_{P,2}\} \triangleq V_P^{*}$. 

Since $k(v_{P,1})=1$, there exists only one node $v_3^{*} \in V_P^{*}$ such that $e(v_1, v_3^{*}) \in E_P$. As $k(v_3^{*})=2$, there exists only one node $v_4^{*} \in V_P^{*}-\{v_3^{*}\}$ such that $e(v_3^{*}, v_4^{*}) \in E_P$. Finally, iterating over all nodes and edges of $G_P(V_P, E_P)$, we can get a sequence of nodes  $v_{P,1}v_3^{*}v_4^{*}\cdots v_{|V_P|}^{*}v_{P,2}$ with any two adjacent nodes being connected with an edge that belongs to $E_P$. Then, $G_P(V_P,E_P)$ is a path. 
\end{proof}

\begin{definition}[Distance] For a connected simple graph $G(V,E)$, a path $G_{P,lm}(V_P, E_P)$ between a pair of distinct nodes $v_l\in V$ and $v_m\in V$ is the shortest path between $v_l$ and $v_m$ if its length is minimal in all paths between $v_l$ and $v_m$. The length of the shortest path of the node pair is referred to as the geodesic distance, which we denote as $d_G(v_l,v_m)$. 
\label{def:2}
\end{definition}

\begin{definition}[Path group] 
\label{def:3}
For a connected simple graph $G(V,E)$ and a node pair $(v_l,v_m)$, let $PG_{lm}=\{G_{P,lm}^1(V_1,E_1)$ $,\cdots,G_{P,lm}^g(V_g,E_g)\}$. 
\begin{enumerate} 
\renewcommand{\labelenumi}{(\roman{enumi})}
\item Call $PG_{lm}$ the path group of $(v_l,v_m)$ if  all elements in $PG_{lm}$ are paths of $(v_l, v_m)$. 

\item Call $PG_{lm}$ the maximum path group of $(v_l,v_m)$ if $PG_{lm}$ is the path group of $(v_l,v_m)$ and all paths of $(v_l,v_m)$ in $G$ belong to $PG_{lm}$.

\item Call $PG_{lm}$ the shortest path group of $(v_l,v_m)$ if all elements of $PG_{lm}$ are the shortest paths of $(v_l, v_m)$. 

\item Call $PG_{lm}$ the maximum shortest path group of $(v_l,v_m)$ if $PG_{lm}$ is the shortest path group of $(v_l,v_m)$ and all shortest paths of $(v_l,v_m)$ in $G$ belong to $PG_{lm}$. 
\end{enumerate}
\end{definition}

\begin{remark}
 If $PG_{lm}$ is the maximum shortest path group of $(v_l,v_m)$, then  $|PG_{lm}|$ is equal to the number of shortest paths of $(v_l,v_m)$ in $G$. For a connected simple graph $G$, we have $|PG_{lm}| \geq 1$. For any $G_{P,lm}^{*}(V^{*}, E^{*}) \in PG_{lm}$, we have $d_G(v_l,v_m) = |E^{*}|=|V^{*}|-1$.   
\label{rmk:1}
 \end{remark}

\begin{remark} 
Let $PG_{lm}$, $MPG_{lm}$, $SPG_{lm}$ and $MSPG_{lm}$ denote the path group, the maximum path group, the shortest path group, and the maximum shortest path group of $(v_l,v_m)$, respectively. Then, we have $SPG_{lm} \subseteq MSPG_{lm} \subseteq MPG_{lm}$ and $PG_{lm} \subseteq MPG_{lm}$.  
\label{rmk:2}
\end{remark}

\begin{remark} 
\label{rmk:3}
For a connected simple graph $G(V,E)$, 
$\{MSPG_{lm}\}_{l,m\in\{1,\cdots,|V|\},l\neq m}$ is a set of all shortest path groups of all node pairs in $G(V,E)$. It gives the information about all shortest paths of all node pairs in $G$. For any $l,m\in\{1,\cdots,|V|\},l\neq m$, there exist $G_{P,lm}^{*}(V_{P,lm}^{*}, E_{P,lm}^{*}) \in MSPG_{lm}$.  Thus, the average distance of all node pairs in $G$ is 
\begin{eqnarray}
\bar{D} &=& \frac{1}{|V|(|V|-1)}\sum_{\substack{v_l,v_m\in V, l\neq m}} |E_{P,lm}^{*}| \nonumber\\
&=&\frac{2}{|V|(|V|-1)}\sum_{l=1}^{|V|-1}\sum_{m=l+1}^{|V|} |E_{P,lm}^{*}|.
\end{eqnarray}
\end{remark}

\subsection{Results of graph theory on edge removal and addition}

We investigate the impact of removing an existing edge from, and adding a new edge to, a connected simple graph on the average distance of the graph. More specifically, for a connected simple graph $G(V,E)$, we need to answer the following two questions:
\begin{enumerate}
\renewcommand{\labelenumi}{(\roman{enumi})}
\item How does removing an existing edge $e(a,b)\in E$ affect the average distance $\bar{D}$ of $G$?

\item How does adding a new non-duplicate edge $e$ between a pair of existing nodes affect the average distance $\bar{D}$ of $G$?  
\end{enumerate}

To answer these two questions, we give the following lemmas and remarks. 

\begin{lemma}[Edge removal] 
\label{lem:edge_removal}
Let $G(V,E)$ denote a connected simple graph.  Assume $e(a,b) \in E$ is an edge in $G$. For any node pair $(v_l, v_m)$ in $G$, $v_l\neq v_m$, let $\Delta d(v_l, v_m)$ be the distance variation of the node pair $(v_l, v_m)$ after edge $e$ is removed from $G$. Then, we have 
\begin{equation}
\Delta d(v_l, v_m) = 0 \text{~or~} \Delta d(v_l, v_m) \geq 1. 
\label{eqn:Deltad_cut}
\end{equation}
\end{lemma}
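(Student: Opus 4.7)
The plan is to exploit two elementary facts: (a) $G-e$ is a subgraph of $G$, so the set of available $v_l$--$v_m$ paths can only shrink, and (b) by Definition~\ref{def:path} and Lemma~\ref{lem:path_subgraph} every path length is a nonnegative integer, so shortest-path distances are integer-valued. Combining these two yields the claimed dichotomy $\Delta d(v_l,v_m)=0$ or $\Delta d(v_l,v_m)\ge 1$ almost immediately.

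First I would set up notation. Let $\mathcal{P}_{lm}(G)$ and $\mathcal{P}_{lm}(G-e)$ denote the collections of all paths between $v_l$ and $v_m$ in $G$ and in $G-e$ respectively. Since every element of $\mathcal{P}_{lm}(G-e)$ is a subgraph using only edges from $E\setminus\{e\}\subseteq E$ and nodes from $V$, it is also a path in $G$, so $\mathcal{P}_{lm}(G-e)\subseteq \mathcal{P}_{lm}(G)$. By Definition~\ref{def:2}, $d_G(v_l,v_m)$ is the minimum length over $\mathcal{P}_{lm}(G)$ while $d_{G-e}(v_l,v_m)$ is the minimum over the subset $\mathcal{P}_{lm}(G-e)$. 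Taking a minimum over a subset cannot decrease the value, so $d_{G-e}(v_l,v_m)\ge d_G(v_l,v_m)$, i.e.\ $\Delta d(v_l,v_m)\ge 0$.

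Next I would upgrade this to the stated dichotomy by integrality. Since by Lemma~\ref{lem:path_subgraph} the length of any path equals the integer $|E_P|$, both $d_G(v_l,v_m)$ and $d_{G-e}(v_l,v_m)$ are integers. Hence $\Delta d(v_l,v_m)$ is a nonnegative integer, and the only possibilities are $\Delta d(v_l,v_m)=0$ or $\Delta d(v_l,v_m)\ge 1$, as claimed.

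The main (and essentially the only) obstacle is the degenerate case in which $e$ is a cut-edge that separates $v_l$ from $v_m$ in $G-e$. Then $\mathcal{P}_{lm}(G-e)=\emptyset$ and $d_{G-e}(v_l,v_m)$ is not a finite integer; under the standard convention $d_{G-e}(v_l,v_m)=+\infty$, one still has $\Delta d(v_l,v_m)=+\infty\ge 1$, so the dichotomy remains valid. I would explicitly flag this convention in the write-up (or, if the lemma is being applied only when connectivity is preserved by the rewiring step described in the paper's compression procedure, point out that the cut-edge case is excluded by assumption). Aside from this bookkeeping, the proof is a one-line invocation of subgraph monotonicity of shortest-path distance together with integrality.
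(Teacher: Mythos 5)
Your proof is correct, and it takes a genuinely different and more economical route than the paper's. You derive the dichotomy from two general facts: subgraph monotonicity of shortest-path distance (every $v_l$--$v_m$ path in $G-e$ is also a path in $G$, so the minimum over the smaller collection cannot decrease) and integrality of path lengths, plus the $+\infty$ convention for the cut-edge case. The paper instead runs a three-way case analysis on how the removed edge $e(a,b)$ sits relative to the maximum shortest path group $MSPG_{lm}$: (i) $e$ lies on no shortest path, (ii) $e$ lies on some but not all shortest paths, (iii) $e$ lies on every shortest path; it shows $\Delta d=0$ in the first two cases (using a contradiction argument that is essentially your monotonicity observation) and $\Delta d\geq 1$ in the third (using that any surviving path in $MPG_{lm}-MSPG_{lm}$ has length at least $d_G(v_l,v_m)+1$, which is your integrality step). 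Your argument is shorter and cleaner for proving the lemma as stated; what the paper's longer proof buys is the sharper structural characterization that $\Delta d(v_l,v_m)\geq 1$ holds precisely when \emph{all} shortest paths of $(v_l,v_m)$ pass through $e$, i.e.\ $|SPG_{lm,e}|=|MSPG_{lm}|$, and this trichotomy is explicitly reused in the proof of Theorem~\ref{thm:upper_bound}, where the edge betweenness $B_G(e)$ counts exactly those node pairs. If you adopted your proof in the paper, that characterization would need to be stated and proved separately before Theorem~\ref{thm:upper_bound}.
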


\begin{proof} 
Assume $MSPG_{lm}$ is the maximum shortest path group of $(v_l,v_m)$ in graph $G(V,E)$. According to Remark~\ref{rmk:3}, $\{MSPG_{lm}\}_{l,m\in\{1,\cdots,|V|\},l\neq m}$ gives all shortest paths of all nodes pairs of $G(V,E)$. Consider edge $e(a,b) \in E$ and node pair $(v_l,v_m)~\forall v_l, v_m\in V$ and $l\neq m$. There are the following three cases:

(i) The edge $e(a,b)$ not belong to any element of $MSPG_{lm}$. Firstly, $d_G(v_l,v_m)$ does not increase after the removal of $e(a,b)$ from $G$ since $MSPG_{lm}$ does not change. This means that the original shortest paths of $(v_l,v_m)$ still exist in $G-e(a,b)$. Secondly,  $d(v_l,v_m)$ does not decrease, i.e., there does not exist a shorter path of $(v_l,v_m)$ in $G-e(a,b)$. Let us assume that there exists a path $G_{P,lm}^{*}(V^{*},E^{*})$ of $(v_l,v_m)$ in $G-e(a,b)$ which is shorter than any element of $MSPG_{lm}$. Then, $MSPG_{lm}$ is a path group of $(v_l,v_m)$ in $G-e(a,b)$ but not a shortest path group. Adding a new edge between node pair $(a,b)$ back to $G-e(a,b)$ to obtain $G$, the path $G_{P,lm}^{*}(V^{*},E^{*})$ still exists in $G$. This contradicts to the original assumption that $MSPG_{lm}$ is the maximum shortest path of $(v_l,v_m)$ in $G$. Therefore, $\Delta d(v_l, v_m) = 0$ in this case.

(ii) The edge $e(a,b)$ belongs to some elements of $MSPG_{lm}$, but there exists at least one element of $MSPG_{lm}$ which does not contain $e(a,b)$. Thus, there exist $G_{P,lm}^1(V_1, E_1) \in MSPG_{lm}$ and $G_{P,lm}^2(V_2, E_2) \in MSPG_{lm}$ such that $e(a,b) \in E_1$ and $e(a,b) \notin E_2$. Then, the path subgraph $G_{P,lm}^1(V_1, E_1)$ is not in $G-e(a,b)$ whereas $G_{P,lm}^2(V_2, E_2)$ is still in $G-e(a,b)$. Similar to the above (i), there does not exist a shorter path of $(v_l,v_m)$ in $G-e(a,b)$. Thus, $\Delta d(v_l, v_m) = 0$.

(iii) All elements of $MSPG_{lm}$ contain the edge $e(a,b)$. If $e(a,b)$ is a bridge, then $d_{G-e(a,b)}(v_l,v_m) = \infty$. Otherwise, let $MPG_{lm}$ be the maximum path group of $(v_l,v_m)$ in $G$, then $\forall~G_{P,lm}(V_P,E_P) \in MPG_{lm} - MSPG_{lm}$, we have $|E_P| \geq d_G(v_l,v_m) + 1$. The shortest path of $(v_l,v_m)$ in $G-e(a,b)$ belongs to $MPG_{lm} - MSPG_{lm}$. It follows that $\Delta d(v_l, v_m) \geq 1$.                        
\end{proof}

\begin{remark} 
According to Lemma \ref{lem:edge_removal}, when an edge $e\in E$ is removed from a connected simple graph $G(V,E)$, the geodesic distances remain unchanged for some node pairs while will increase for some other node pairs. Therefore, removing an edge from $G$ will lead to an increase in the average distance $\bar{D}$ of $G$.
\label{rmk:4}
\end{remark}

\begin{remark}
For any edge $e(a,b)\in E$, if the geodesic distance $d_G(v_l,v_m)$ of $(v_l,v_m)$ increases with the removal of $e$ from $G$, then $\Delta d(v_l, v_m) \leq d_{G-e}(a,b) - 1$.
If $G-e(a,b)$ is disconnected, $\Delta d(v_l, v_m) = d_{G-e}(a,b) - 1 =+\infty$; otherwise, $\Delta d(v_l, v_m) \leq d_{G-e}(a,b) - 1 < +\infty$ since both $G_{P,la}+G_{P,ab}+G_{P,bm}$ and $G_{P,lb}+G_{P,ba}+G_{P,am}$ are not necessarily the shortest path. For example, $d_{G-e}(v_l,v_m) = d_G(v_l,v_m) + (d_{G-e}(a,b) - 1)$ in case (i) of Fig.~\ref{fig:compre_evo}a) and $d_{G-e}(v_l,v_m) < d_G(v_l,v_m) + (d_{G-e}(a,b) - 1)$ in case (ii) of Fig.~\ref{fig:compre_evo}a).  
\label{rmk:5}
\end{remark}

\begin{lemma}[Edge addition] 
\label{lem:edge_addition}
Let $G(V,E)$ denote a connected simple graph. Assume there does not exist an edge between a pair of existing nodes $(a,b)$ in $G$.
For any other node pair $(v_l, v_m)$, let $\Delta d(v_l, v_m)$ denote the distance variation of the node pair $(v_l, v_m)$ after a new non-duplicate edge $e(a,b)$ is added to $G$. Then, 
\begin{align}
&\Delta d(v_l, v_m) = 0, ~\text{or}~=1 - d_G(a,b), \text{~or~} = \min\{d_{G}(v_l,a)\nonumber\\
&~~+d_{G}(v_m,b)+1, d_{G}(v_l,b)+d_{G}(v_m,a)+1\} \nonumber\\
&~~- d_G(v_l,v_m), \text{~when~} \min\{d_{G}(v_l,a)+d_{G}(v_m,b)+1, \nonumber\\
&~~d_{G}(v_l,b) +d_{G}(v_m,a)+1\} < d_G(v_l,v_m).  
\label{eqn:Deltad_add}
\end{align}
\end{lemma}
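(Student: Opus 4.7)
The plan is to analyze the shortest path between $v_l$ and $v_m$ in $G+e$ by case analysis based on whether this path uses the new edge $e(a,b)$ or not. The basic observation is that adding an edge can only reduce distances, so $d_{G+e}(v_l, v_m) \leq d_G(v_l, v_m)$ and hence $\Delta d(v_l, v_m) \leq 0$; equality thus corresponds to the first alternative, and the other two alternatives correspond to the strict decrease.

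First I would handle the pair $(a,b)$ itself (should this be intended by the statement): the new edge directly connects $a$ and $b$, so $d_{G+e}(a,b) = 1$, which gives $\Delta d(a,b) = 1 - d_G(a,b)$, matching the second alternative. For any other pair $(v_l, v_m)$, I split into two subcases. If no shortest $v_lv_m$-path in $G+e$ uses the edge $e(a,b)$, then every such path lies entirely in $G$, so $d_{G+e}(v_l, v_m) = d_G(v_l, v_m)$ and $\Delta d(v_l, v_m) = 0$.

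Otherwise, some shortest $v_lv_m$-path in $G+e$ uses $e(a,b)$. Since shortest paths are simple (by Definition~\ref{def:path} together with Lemma~\ref{lem:path_subgraph}), this edge is traversed exactly once, and the path decomposes as $v_l \cdots a\,b \cdots v_m$ or $v_l \cdots b\,a \cdots v_m$. The two endpoint subpaths avoid $e(a,b)$ entirely, so each is a path of $G$, hence has length at least $d_G(v_l, a)$ and $d_G(b, v_m)$ respectively (or the swapped pair). Conversely, concatenating shortest paths of $G$ with the new edge realises these bounds in $G+e$. Therefore the minimal length of a path using $e(a,b)$ equals $\min\{d_G(v_l,a) + d_G(v_m,b) + 1,\ d_G(v_l,b) + d_G(v_m,a) + 1\}$, and we conclude that
\begin{equation*}
d_{G+e}(v_l, v_m) = \min\bigl\{d_G(v_l,v_m),\ d_G(v_l,a)+d_G(v_m,b)+1,\ d_G(v_l,b)+d_G(v_m,a)+1\bigr\}.
\end{equation*}
Subtracting $d_G(v_l,v_m)$ yields the third alternative precisely when the displayed condition $\min\{\cdots\} < d_G(v_l,v_m)$ holds, and yields $0$ otherwise.

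The main obstacle will be justifying that the two endpoint subpaths in the decomposition are lower bounded by $d_G$-distances rather than $d_{G+e}$-distances; this relies on the simplicity of shortest paths, which guarantees that $e(a,b)$ is used at most once in the overall path and hence not at all in either subpath. A minor point to be careful about is ensuring the achievability of the lower bound: concatenating a shortest $v_l a$-path in $G$, the new edge, and a shortest $b v_m$-path in $G$ yields a valid walk, but it might repeat vertices if the two subpaths meet; however, even if such a walk is not simple, it still gives an upper bound of the required form, and any shorter alternative would already be a path in $G$, contradicting the definition of $d_G$.
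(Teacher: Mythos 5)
Your proof is correct, and it takes a somewhat different route from the paper's. The paper cases on where $a$ and $b$ sit relative to the shortest $v_l v_m$-paths of the \emph{original} graph $G$ --- case (i) when both lie on some shortest path, case (ii) otherwise --- and constructs the new shortest path in $G+e$ by splicing in the edge $e(a,b)$ or by concatenating shortest $v_l a$- and $b v_m$-paths. You instead decompose a shortest $v_l v_m$-path of $G+e$ according to whether it traverses $e(a,b)$, which yields the single identity $d_{G+e}(v_l,v_m)=\min\{d_G(v_l,v_m),\,d_G(v_l,a)+d_G(v_m,b)+1,\,d_G(v_l,b)+d_G(v_m,a)+1\}$ subsuming all three alternatives of the lemma. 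Your version is tighter on the one point the paper glosses over: it establishes the \emph{lower} bound (that no still shorter path exists in $G+e$) via the simplicity of shortest paths, whereas the paper's case (i) merely asserts that the spliced path is shortest. One small mismatch with the statement: you attach the alternative $1-d_G(a,b)$ to the pair $(a,b)$ itself, but the lemma excludes that pair (``any other node pair''); in the paper this alternative is the case where both $a$ and $b$ lie on a shortest $v_l v_m$-path, which in your formula is exactly the situation where the third alternative evaluates to $d_G(v_l,a)+d_G(v_m,b)+1-d_G(v_l,v_m)=1-d_G(a,b)$ (using $d_G(v_l,v_m)=d_G(v_l,a)+d_G(a,b)+d_G(b,v_m)$ and checking that the other candidate in the minimum is larger). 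So nothing is lost; your unified formula recovers the paper's second alternative as a special case rather than omitting it.
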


\begin{proof}  
Since node pair $(a,b)$ is not directly connected, $d_G(a,b) \geq 2$. For any node pair $(v_l,v_m)$ in $G(V,E)$, let $MSPG_{lm}$ and $MPG_{lm}$ denote the maximum shortest path group and the maximum path group of $(v_l,v_m)$ in $G$, respectively. We have the following two cases:

(i) Both $a$ and $b$ belong to at least one shortest path of $(v_l,v_m)$. For a path subgraph $G_{P,lm}^{*}(V^{*},E^{*}) \in MSPG_{lm}$ with $\{a,b\} \subset V^{*}$, the path of $(a,b)$ in $G_{P,lm}^{*}(V^{*},E^{*})$ is a shortest path of $(a,b)$ (denote as $G_{P,ab}(V_{ab}, E_{ab})$). Thus, we have $d_G(a,b) = |E_{ab}| \geq 2$ and $E_{ab} \subseteq E^{*}$. 

When a new edge $e(a,b)$ is added to $G$, $G_{P,lm}^{*}(V^{*},E^{*})$ can also be added with the new edge $e(a,b)$ to become $G_{P,lm}^{*}(V^{*},E^{*})+e(a,b)$. The graph $G_{P,lm}^{*}(V^{*},E^{*})+e(a,b)$ is not a path since the conditions (ii) and (iii) in Lemma~\ref{lem:path_subgraph} do not hold. Denote $G_{P,lm}^{'}(V^{'}, E^{'})$ as the  shortest path of $(v_l,v_m)$ in $G+e(a,b)$. Then, we have $V^{'} = V^{*} - V_{ab} + \{a,b\}$ and $E^{'} = E^{*} - E_{ab} + \{e(a,b)\}$. It follows that
\begin{align}
d_{G+e}(v_l, v_m) &= |E^{'}| = |E^{*}| - |E_{ab}| +1 \nonumber\\
&= d_G(v_l,v_m) - d_G(a,b) + 1.
\label{eqn:lem3proof}
\end{align}

(ii) Either or both of nodes $a$ and $b$ do not belong to the shortest path of $(v_l,v_m)$ in $G$. This means that $\{a,b\} \not\subset V^{*}$ for any $G_{P,lm}^{*}(V^{*},E^{*}) \in MSPG_{lm}$. Let $G_{P,1}(V_1,E_1)$, $G_{P,2}(V_2,E_2)$, $G_{P,3}(V_3,E_3)$ and $G_{P,4}(V_4,E_4)$ be shortest paths of $(v_l,a)$, $(v_m,b)$, $(v_l,b)$ and $(v_m,a)$ in $G$, respectively. 

If $\min\{d_G(v_l,a)+d_G(v_m,b)+1, d_G(v_l,b)+d_G(v_m,a)+1\} < d_G(v_l,v_m)$, without loss of generality, assume $d_G(v_l,a)+d_G(v_m,b)+1 < d_G(v_l,b)+d_G(v_m,a)+1$, then a new edge $e(a,b)$ can connect the paths $G_{P,1}(V_1,E_1)$ and $G_{P,2}(V_2,E_2)$ to obtain a new path after adding $e(a,b)$ in $G$. Denote the new path as $G_{P}^{'}(V^{'},E^{'})$. It is seen that $G_{P}^{'}(V^{'},E^{'})$ is a shortest path of $(v_l,v_m)$ in $G+e(a,b)$ because $G_{P,1}(V_1,E_1)$ and $G_{P,2}(V_2,E_2)$ are shortest paths of $(v_l,a)$ and $(v_m,b)$, respectively. If $\min\{d_G(v_l,a)+d_G(v_m,b)+1, d_G(v_l,b)+d_G(v_m,a)+1\} > d_G(v_l,v_m)$, then $MSPG_{lm}$ is still the shortest path group of $(v_l,v_m)$ in $G+e(a,b)$, implying that $d_{G+e}(v_l,v_m) = d_G(v_l,v_m)$.   
\end{proof}

\begin{remark} According to Lemma~\ref{lem:edge_addition}, after a new non-duplicate edge $e$ is added to a simple graph $G(V,E)$, the geodesic distances remain the same for some node pairs but will decrease for other node pairs. This highlights the fact that adding an edge to $G$ will result in a reduction in the average distance $\bar{D}$ of $G$.
\label{rmk:6}
\end{remark}

\subsection{Bounds of network average-distance variations}

From the above analysis, the average distance $\bar{D}$ of a network $G(V,E)$ will increase when an edge $e\in E$ is removed from $G$. 
Conversely, $\bar{D}$ will decrease following the addition of a new non-duplicate edge $e$ between two existing nodes in $G$.  But how and where to remove or add an edge? This requires a rigorous theory to answer. Then, an effective compression evolution can be built on this theory. In the following, we will present our theory and conceptual design of compressional evolution for networks.
 
\begin{definition}[Edge betweenness]  
\label{def:4}
Let $G(V,E)$ be a connected simple graph.
For any edge $e(a, b) \in E$, let $MSPG_{lm}$ and $SPG_{lm,e}$ denote the maximum shortest path group of $(v_l,v_m)$ and shortest path group with its all path graphs containing $e$, respectively. Then, the edge betweenness $B_G(e)$ of $e$ is defined as
\begin{equation}
B_G(e) = \hspace{-1ex}\sum_{\substack{l,m; l\neq m,\{v_l,v_m\} \neq \{a,b\} }}\hspace{-2ex} {|SPG_{lm,e}|}/{|MSPG_{lm}|}. 
\end{equation}
\end{definition}

\noindent\textit{Edge removal upper-bound of $\bar{D}$ increase:}

\begin{theorem}[Upper bound] Let $\Delta \bar{D}$ be the variation of $\bar{D}$ after the removal of edge $e(a,b)$ from $G(V,E)$. 
Then, 
\begin{equation}
{\Delta \bar{D}} \leq \frac{(B_G(e)+2)(d_{G-e}(a,b)-1)}{|V|(|V|-1)} \triangleq \overline{\Delta D_e^-}. 
\label{eqn:DeltaD-e}
\end{equation}
$\overline{\Delta D_e^-}$ is an upper bound of $\bar{D}$ increment resulting from the removal of $e$.
\label{thm:upper_bound}
\end{theorem}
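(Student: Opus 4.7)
The plan is to reduce the global quantity $\Delta\bar{D}$ to a normalised sum of pairwise distance increments and then majorise each term, distinguishing whether the removed edge $e(a,b)$ is critical for the shortest paths of the relevant pair. Using Remark~\ref{rmk:3},
\[
\Delta\bar{D}=\frac{1}{|V|(|V|-1)}\sum_{l\neq m}\Delta d(v_l,v_m),
\]
where the summation ranges over all ordered node pairs. I would split this sum into the two ordered pairs $(a,b),(b,a)$ on the one hand and all remaining ordered pairs on the other.

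For the pair $(a,b)$, removing $e(a,b)$ lifts its geodesic distance from $1$ to $d_{G-e}(a,b)$, contributing exactly $2(d_{G-e}(a,b)-1)$ to the sum and giving the ``$+2$'' factor in the final bound. For any remaining ordered pair $(v_l,v_m)$, I would invoke a dichotomy hidden in the proof of Lemma~\ref{lem:edge_removal}: the condition $\Delta d(v_l,v_m)>0$ holds exactly in its case (iii), i.e.\ precisely when every shortest path of $(v_l,v_m)$ contains $e$, which is equivalent to $|SPG_{lm,e}|/|MSPG_{lm}|=1$; otherwise the ratio is strictly less than $1$ and $\Delta d(v_l,v_m)=0$. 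Combining this observation with the pointwise cap $\Delta d(v_l,v_m)\leq d_{G-e}(a,b)-1$ supplied by Remark~\ref{rmk:5} (and covering the bridge subcase with $+\infty$ on both sides) yields the uniform inequality
\[
\Delta d(v_l,v_m)\leq (d_{G-e}(a,b)-1)\,\frac{|SPG_{lm,e}|}{|MSPG_{lm}|}
\]
for every ordered pair $(v_l,v_m)\notin\{(a,b),(b,a)\}$. Summing over such pairs and recognising Definition~\ref{def:4} on the right-hand side produces the majorant $(d_{G-e}(a,b)-1)B_G(e)$ for the remaining piece.

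Adding the two contributions and dividing by $|V|(|V|-1)$ produces exactly $\overline{\Delta D_e^-}$. The main obstacle I expect is justifying the dichotomy step rigorously: one has to argue that the pairs whose geodesic distance can strictly increase are precisely those carrying the full weight $|SPG_{lm,e}|/|MSPG_{lm}|=1$, so that the loose pointwise cap $d_{G-e}(a,b)-1$ can be rescaled by the betweenness ratio without any slack when the ratio is smaller than one. Once this dichotomy is in place, the remainder is bookkeeping, and the bridge case ($G-e$ disconnected) is consistent with the formula since both $\Delta\bar{D}$ and the right-hand side then become $+\infty$.
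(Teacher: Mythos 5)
Your proposal is correct and follows essentially the same route as the paper's own proof: the same split into the two ordered pairs $(a,b),(b,a)$ (yielding the ``$+2$'') versus all remaining pairs, the same dichotomy from the proof of Lemma~\ref{lem:edge_removal} showing that $\Delta d(v_l,v_m)>0$ only when $|SPG_{lm,e}|=|MSPG_{lm}|$, and the same pointwise cap from Remark~\ref{rmk:5} rescaled by the betweenness ratio. No substantive differences to report.
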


\begin{proof} 
Assume $e(a,b) \in E$ is an edge to be removed from $G(V,E)$, and after $e$ is removed the geodesic distance of the node pair $(a,b)$ is $d_{G-e}(a, b)$. For any $l,m \in \{1,2,\cdots,|V|\}$, $l\neq m$ and $\{v_l,v_m\} \neq \{a,b\}$, let $MSPG_{lm}$ and $SPG_{lm,e}$ be the maximum shortest path group of $(v_l,v_m)$ and the shortest path group in which each path passes through the edge $e$ in $G$, respectively. According to the proof of Lemma~\ref{lem:edge_removal}, we have $|SPG_{lm,e}|=0$, or $0<|SPG_{lm,e}|<|MSPG_{lm}|$, or $|SPG_{lm,e}|=|MSPG_{lm}|$. $\bar{D}$ can increase after $e$ is removed from $G$ if and only if  $|SPG_{lm,e}|=|MSPG_{lm}|$. 
According to Remark~\ref{rmk:5}, we have  
\begin{align}
&\Delta \bar{D} = \frac{1}{|V|(|V|-1)} \sum_{\substack{ l,m; l\neq m }}(d_{G-e}(v_l, v_m) - d_G(v_l, v_m)) \notag\\
&\leq \frac{(d_{G-e}(a,b)-1)}{|V|(|V|-1)}\left(2+\hspace{-3ex}\sum_{\substack{ l,m;l\neq m,\\ \{v_l,v_m\} \neq \{a,b\} }} \hspace{-3ex}\frac{|SPG_{lm,e}|}{|MSPG_{lm}|}\right), \nonumber
\end{align}
giving inequality (\ref{eqn:DeltaD-e}). This complets the proof.
\end{proof}

\noindent\textit{Edge addition lower-bound of $\bar{D}$ decrease:}

\begin{theorem}[Lower bound] Let $\Delta \bar{D}$ be the $\bar{D}$ variation when a new non-duplicate edge $e(a,b)$ is added to $G(V,E)$. Also, after the addition of $e$, the distance variation of node pair $(v_l, v_m)$ is denoted as $\Delta d(v_l,v_m)$. Then, 
\begin{align}
& \Delta \bar{D} \leq \frac{(B_{G+e}(e)+2)\Delta d_{max,e}}{|V|(|V|-1)} \triangleq -\underline{\Delta D_e^+}, 
\label{eqn:DeltaD+e}\\
& \Delta d_{max,e} =\hspace{-2ex} \max_{\shortstack{$v_l \neq v_m,$\\ $v_l, v_m \in V$}} \hspace{-2ex}\{\Delta d(v_l,v_m) \mid \Delta d(v_l,v_m) \neq 0\}. 
\label{eqn:Deltad}
\end{align}
$\Delta \bar{D}<0$ and the decrement of $\bar{D}$ equals to $-\Delta \bar{D}$. 
Thus, $\underline{\Delta D_e^{+}}>0$ is a lower bound of $\bar{D}$ decrement due to the addition of the edge $e$.
\label{thm:lower_bound}
\end{theorem}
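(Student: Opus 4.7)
The plan is to adapt the proof of Theorem~\ref{thm:upper_bound} to the edge-addition setting. First I would write
\[
\Delta \bar{D} = \frac{1}{|V|(|V|-1)} \sum_{l,m;\, l\neq m} \Delta d(v_l, v_m)
\]
and split off the two ordered contributions associated with the pair $(a,b)$. By Lemma~\ref{lem:edge_addition} these two terms equal exactly $2\Delta d(a,b) = 2(1-d_G(a,b))$; since $\Delta d(a,b)$ is itself a nonzero entry of the set over which $\Delta d_{max,e}$ is taken, it is bounded above by $\Delta d_{max,e}$, so the $(a,b)$ block contributes at most $2\Delta d_{max,e}$.

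For every other ordered pair $(v_l,v_m)$ with $\{v_l,v_m\}\neq\{a,b\}$, Lemma~\ref{lem:edge_addition} leaves only $\Delta d(v_l,v_m) = 0$ or $\Delta d(v_l,v_m) < 0$. In the second case I would establish the key structural fact that every shortest path of $(v_l,v_m)$ in $G+e$ must traverse the new edge $e$: otherwise, that same path would already lie in $G$ with length $d_{G+e}(v_l,v_m) < d_G(v_l,v_m)$, contradicting the definition of $d_G(v_l,v_m)$. Hence $|SPG_{lm,e}|/|MSPG_{lm}|$ computed in $G+e$ equals $1$ for every pair with $\Delta d(v_l,v_m)\neq 0$, which reduces the pointwise bound
\[
\Delta d(v_l,v_m)\;\leq\;\Delta d_{max,e}\,\frac{|SPG_{lm,e}|}{|MSPG_{lm}|}
\]
to the defining inequality $\Delta d(v_l,v_m)\leq \Delta d_{max,e}$ in the nonzero case, while pairs with $\Delta d(v_l,v_m) = 0$ are checked separately. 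Summing this pointwise inequality over all ordered pairs with $\{v_l,v_m\}\neq\{a,b\}$ and invoking Definition~\ref{def:4} would replace the aggregated fractions by $B_{G+e}(e)$; combining with the $(a,b)$ contribution would then yield the stated bound, and $\Delta d_{max,e}<0$ would certify both $\Delta\bar D<0$ and $\underline{\Delta D_e^{+}}>0$.

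The main obstacle will be the pointwise inequality displayed above. Because $\Delta d_{max,e}$ is \emph{negative} (whereas the corresponding prefactor $d_{G-e}(a,b)-1$ in Theorem~\ref{thm:upper_bound} was positive), the ``indicator $\leq$ fraction'' substitution that made Theorem~\ref{thm:upper_bound} routine here reverses direction upon multiplication. The fix is to exploit the stronger equality ``fraction $=1$ whenever $\Delta d\neq 0$'' supplied by the structural observation above, and to dispatch the $\Delta d = 0$ case by a direct argument on the triangle-inequality geometry through $a$ and $b$. Once this pointwise bound is secured, the aggregation over pairs and the identification of the summed fractions with $B_{G+e}(e)$ follow exactly as in the proof of Theorem~\ref{thm:upper_bound}.
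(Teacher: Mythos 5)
Your route is the same one the paper takes: expand $\Delta \bar{D}$ as the average of the $\Delta d(v_l,v_m)$ over ordered pairs, observe that any pair whose distance strictly decreases has \emph{all} of its shortest paths in $G+e$ passing through $e$ (so its betweenness fraction is $1$), and then identify $B_{G+e}(e)+2$ with the number of such ordered pairs so that the sum is dominated by $(B_{G+e}(e)+2)\Delta d_{max,e}$. You have also put your finger on exactly the delicate point: since $\Delta d_{max,e}<0$, enlarging the count to the betweenness sum reverses the inequality. The gap is that your proposed repair does not close. The pairs you defer --- those with $\Delta d(v_l,v_m)=0$ --- are precisely where the pointwise bound $\Delta d(v_l,v_m)\le \Delta d_{max,e}\,|SPG_{lm,e}|/|MSPG_{lm}|$ fails: whenever adding $e$ creates a \emph{new shortest path of unchanged length} through $e$, the fraction is strictly positive while $\Delta d=0$, so the claimed pointwise inequality reads $0\le(\text{negative})$. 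No triangle-inequality argument can dispatch such pairs, because the inequality is simply false for them; they inflate $B_{G+e}(e)$ beyond the number of strictly improved pairs, and multiplication by the negative $\Delta d_{max,e}$ then pushes the claimed bound below the true $\Delta\bar{D}$.

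This is not a repairable presentation issue but a genuine failure of the target inequality. Take $G=C_6$ with vertices $x_1,\dots,x_6$ and add $e(x_1,x_4)$. Only the pair $(x_1,x_4)$ improves ($\Delta d=-2$, hence $\Delta d_{max,e}=-2$ and $\sum_{l\neq m}\Delta d(v_l,v_m)=-4$, so $\Delta\bar{D}=-2/15$), yet $(x_2,x_4)$, $(x_1,x_3)$, $(x_1,x_5)$, $(x_4,x_6)$ each acquire a tied shortest path through $e$ (fraction $1/2$) and $(x_2,x_5)$, $(x_3,x_6)$ acquire one with fraction $1/3$, giving $B_{G+e}(e)=16/3$ over ordered pairs and a claimed bound of $(16/3+2)(-2)/30=-22/45<-2/15$. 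For what it is worth, the paper's own proof contains the same unexamined step: it asserts that $B_{G+e}(e)+2$ \emph{equals} the number of node pairs whose distance decreases, which holds only when $e$ creates no tied shortest paths. A correct version of the argument must replace $B_{G+e}(e)+2$ by the number of ordered pairs with $|SPG_{lm,e}|=|MSPG_{lm}|$ \emph{and} $\Delta d(v_l,v_m)<0$ (plus the two orderings of $(a,b)$); your structural observation gives exactly that count, but it is not the edge betweenness appearing in the statement.
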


\begin{proof} 
Assume $a\in V$ and $b\in V$ are not directly connected and $e(a, b)$ is a new edge to be added into $G$.  Let $MSPG_{lm}$ and $SPG_{lm,e}$ denote the maximum shortest path group of $(v_l,v_m)$ and shortest path group which paths passes through the edge $e$ in $G + e$, respectively. For each node pair $(v_l,v_m)$, according to the proof of  Lemma~\ref{lem:edge_addition}, case (i) will cause the distance of the node pair to decrease by $d_G(a,b)-1$,  and case (ii) may lead to a reduction in the distance by $d_G(v_l,v_m)-\min\{d_G(v_l,a)+d_G(v_m,b)+1, d_G(v_l,b)+d_G(v_m,a)+1\}$ when $\min\{d_G(v_l,a)+d_G(v_m,b)+1, d_G(v_l,b)+d_G(v_m,a)+1\} < d_G(v_l,v_m)$. In both cases, we have $MSPG_{lm} = SPG_{lm,e}$ in $G+e$ if the distance between $(v_l,v_m)$ is reduced. Hence, the edge betweenness $B_{G+e}(e)+2$ is actually the number of node pairs that cause the distance to decrease after the addition of $e$ into $G$. Thus, 
\begin{eqnarray}
\Delta \bar{D} &=& \frac{1}{|V|(|V|-1)} \sum_{\substack{ l,m; l\neq m }}(d_{G+e}(v_l,v_m) - d_{G}(v_l,v_m)) \nonumber\\
&=& \frac{1}{|V|(|V|-1)} \sum_{\substack{ l,m; l\neq m }} (\Delta d(v_l,v_m))    \nonumber 
\end{eqnarray}
yielding inequality (\ref{eqn:DeltaD+e}). This completes the proof.  
\end{proof}

\noindent\textit{Conceptual rewiring design for topological compression:}

A rewiring step for a network consists of two sequential operations: edge removal and edge addition~\cite{LYA2023}. 
When an edge is removed from, or added to, a network, the resulting increase or decrease in $\bar{D}$ can be controlled by the upper bound $\overline{\Delta D_{e}^{-}}$ or lower bound $\underline{\Delta D_{e}^{+}}$ according to Theorem \ref{thm:upper_bound} or Theorem \ref{thm:lower_bound}. Therefore, effective rewiring to compress network topology should satisfy the following conditions:  
\begin{enumerate}
\renewcommand{\labelenumi}{(\roman{enumi})}
\item Edge removal: The edge with a smaller $\overline{\Delta D_{e}^{-}}$, when removed, should be selected for removal. This will cause a smaller increase in $\bar{D}$.

\item Edge addition: A new non-duplicate edge that gives a larger $\underline{\Delta D_{e}^{+}}$, when added, should be chosen to add for a larger reduction in $\bar{D}$.
\end{enumerate}

We provide the following network compression evolution.

\subsection{Mathematical formalism of effective compression evolution for network topology}

For a connected simple graph $G(V,E)$ representing a network, let $P_{rew}>1/|E|$ be the graph evolution fraction.  Then, the number of graph evolution steps is $N_{max}=\lceil P_{rew}|E|\rceil > 1$. Let $\mathcal{T}_{cut}(v_{1},v_{2})$ and $\mathcal{T}_{add}(v_{1},v_{2})$ be the removal and addition operators of edge $e(v_{1},v_{2})$, respectively. 
From the theory established above, we developed the following compression evolution for a network:\\ 
\begingroup
\allowdisplaybreaks
\begin{align}
&\underbrace{G(V,E)}_{\substack{ \text{ Initial network }}}\nonumber\\
&~\underbrace{%
\xrightarrow{\mathcal T_{cut}(a,b)}G_{1}^{cut}(V,E_{1}^{cut})%
\xrightarrow{\mathcal T_{add}(a,a_1)}G_{1}^{add}(V,E_{1}^{add})}_{\substack{ 
\text{step 1}}}\cdots \nonumber\\
&~\underbrace{\xrightarrow{\mathcal T_{cut}} G_{i}^{cut}(V,E_{i}^{cut})\xrightarrow{\mathcal T_{add}}G_{i}^{add}(V,E_{i}^{add})}_{\substack{ \text{step $i,i\geq 2$}}}\cdots \nonumber \\
&~\underbrace{\xrightarrow{\mathcal
T_{cut}}%
G_{N_{max}}^{cut}(V,E_{N_{max}}^{cut})\xrightarrow{\mathcal
T_{add}}G_{N_{max}}^{add}(V,E_{N_{max}}^{add})} 
_{\substack{\text{ step $N_{max}$}}}
\label{eqn:net_evo}
\end{align}%
where for
\begin{align*}
&    \text{Step~}i \geq 2:
    \left\{%
    \begin{array}{@{}r@{~}l@{}}
        \mathcal T_{cut}: &\mathcal T_{cut}(a_{2(i-2)+1},a_{2(i-1)}),\\
        \mathcal T_{add}: &\mathcal
T_{add}(a_{2(i-1)},a_{2(i-1)+1}),
    \end{array}
    \right.%
    %
    \\
&    \text{Step~}N_{max}:
    \left\{\begin{array}{@{}r@{~}l@{}}
        \mathcal T_{cut}: &\mathcal
T_{cut}(a_{2(N_{max}-2)+1},a_{2(N_{max}-1)}),\\
        \mathcal T_{add}: &\mathcal
T_{add}(a_{2(N_{max}-1)},b).
    \end{array}
    \right.
\end{align*}
\endgroup

For the evolution process shown in Eq. (\ref{eqn:net_evo}), we set $e(a,b)=\mathop{\arg \min}_{e \in E} B_G(e)$ as the initial edge to remove. Then, we find node pair $(a,a_1)$ to add a new non-duplicate edge $e(a,a_1)$ to $G$ such that $a_1 = \mathop{\arg\max}_{v \in V_{a_1}^A}$ $B_{G_1^{add}}(e(a,v))d_{G_1^{cut}}(a,v)$, where $d_{G_1^{cut}}(a,\cdot)$ and $B_{G_1^{add}}(e(a,\cdot))$ are the distance of the node pair $(a,\cdot)$ in $G_1^{cut}$ and the edge betweenness of edge $e(a,\cdot)$ in $G_1^{add}$, respectively. 
\begin{align}
V_{a_1}^A = \left\{ 
\begin{array}{ll}
V-\bar{V}_{a}, &G_1^{cut}(V, E_1^{cut})~\text{connected};\\[4pt] 
V_{1,1}-\{b\}, & 
G_1^{cut}(V, E_1^{cut})=G_{1,1}^{cut}(V_{1,1},
E_{1,1}^{cut})+\\[4pt]
\multicolumn{2}{l}{\hspace{\mytmpindentfour}G_{1,2}^{cut}(V_{1,2}, E_{1,2}^{cut})~\text{disconnected}, a \in
V_{1,2};}\\ [4pt]
V_{1,2}-\{b\}, & 
G_1^{cut}(V, E_1^{cut})=G_{1,1}^{cut}(V_{1,1},
E_{1,1}^{cut})+\\[4pt]
\multicolumn{2}{l}{\hspace{\mytmpindentfour}G_{1,2}^{cut}(V_{1,2}, E_{1,2}^{cut}) ~\text{disconnected}, a \in
V_{1,1},}%
\end{array}%
\right.
\end{align}%
where $\bar{V}_{a}$ represents the set of node $a$ and its neighboring nodes in $G(V,E)$. 
In the initial step, we set the smaller degree node of initial edge endpoint as $a$, ensuring network connectivity after the first evolution step.
Let $a_0=a$ and for $2\leq i\leq N_{max}$, set $a_{2(i-1)}=\mathop{\arg \min}_{v \in V_{a_{2(i-1)}}^A} [B_{G_{i-1}^{add}}(e(a_{2(i-2)+1}, v))d_{G_i^{cut}}(a_{2(i-2)+1}, v)]$, where $V_{a_{2(i-1)}}^A = V_{a_{2(i-2)+1}} -\{a_{2(i-2)}\}$, $V_{a_{2(i-2)+1}}$ is the set of neighbor nodes of node $a_{2(i-2)+1}$ in $G_{i-1}^{add}(V,E_{i-1}^{add})$, and $d_{G_i^{cut}}(a_{2(i-2)+1}, v)$ and $B_{G_{i-1}^{add}}(e(a_{2(i-2)+1}, v))$ are the distance of node pair $(a_{2(i-2)+1}, v)$ in $G_i^{cut}$ and the edge betweenness of the edge $e(a_{2(i-2)+1}, v)$ in $G_{i-1}^{add}$, respectively. Then we remove the edge $e(a_{2(i-2)+1}, a_{2(i-1)})$ in $G_{i-1}^{add}$ to obtain $G_{i}^{cut}$. 
For $2\leq i\leq N_{max}-1$, set $a_{2(i-1)+1} = \mathop{\arg \max}_{v \in V_{a_{2(i-1)+1}}^A} [B_{G_i^{add}}(e(a_{2(i-1)}, v))d_{G_i^{cut}}(a_{2(i-1)}, v)]$, where 
\begin{align}
&V_{a_{2(i-1)+1}}^A = \nonumber\\[4pt]
&\left\{ 
\begin{array}{ll}
V-\bar{V}_{a_{2(i-1)}}, ~G_i^{cut}(V, E_i^{cut})~\text{connected}; &\\[4pt]
V_{i,1}-\{a_{2(i-2)+1}\}, &\\[4pt]
\hspace{\mytmpindentfour}G_i^{cut}(V,E_i^{cut})=G_{i,1}^{cut}(V_{i,1}, E_{i,1}^{cut})+&\\[4pt]
\hspace{\mytmpindentfour}G_{i,2}^{cut}(V_{i,2},
E_{i,2}^{cut})~\text{disconnected}, a_{2(i-1)}\in V_{i,2}; &\\[4pt]
V_{i,2}-\{a_{2(i-2)+1}\}, &\\[4pt]
\hspace{\mytmpindentfour}G_i^{cut}(V,
E_i^{cut})=G_{i,1}^{cut}(V_{i,1}, E_{i,1}^{cut})+&\\[4pt]
\hspace{\mytmpindentfour}G_{i,2}^{cut}(V_{i,2},
E_{i,2}^{cut})~\text{disconnected}, a_{2(i-1)}\in V_{i,1}.&%
\end{array}%
\right.
\label{eqn:add_edge_i_node_set}
\end{align}%
In Eq. (\ref{eqn:add_edge_i_node_set}), $\bar{V}_{a_{2(i-1)}}$ represents the set of node $%
a_{2(i-1)}$ and its neighboring
nodes in $G_{i-1}^{add}(V,E_{i-1}^{add})$, $G_{i,1}^{cut}(V_{i,1},E_{i,1}^{cut})$ and $%
G_{i,2}^{cut}(V_{i,2},E_{i,2}^{cut})$ are two connected components of $%
G_{i}^{cut}(V,E_{i}^{cut})$ if $G_{i}^{cut}$ disconnected. Then we add a non-duplicate edge $e(a_{2(i-1)}, a_{2(i-1)+1})$ in $G_i^{cut}$.

\begin{figure*}[htb!] 
\centering 
\includegraphics[width=0.9\linewidth,trim=76 420 72 50,clip]{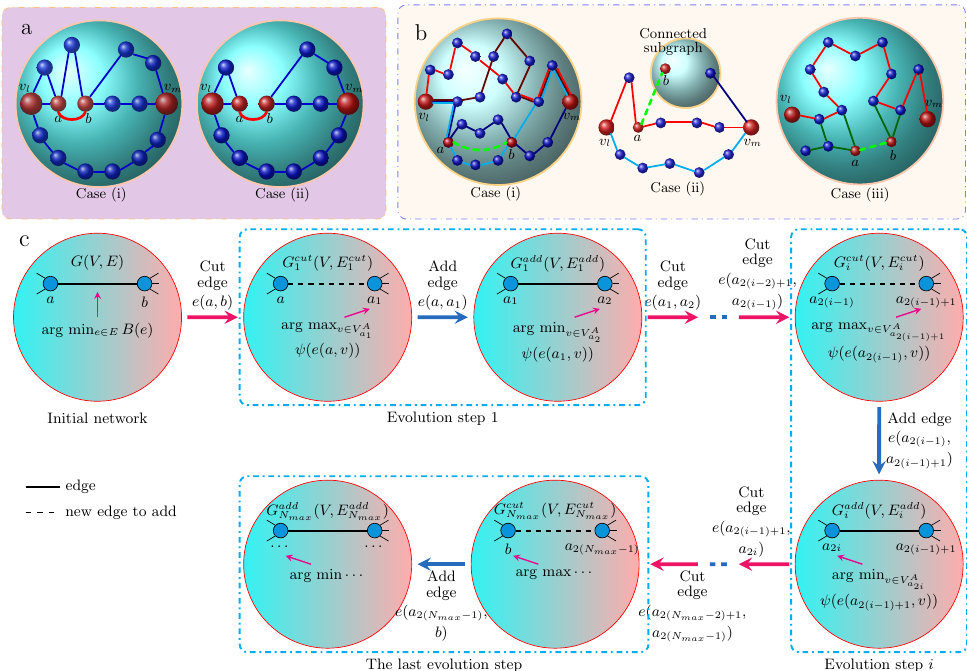}
\caption{%
\textbf{Schematic of network compression evolution.}
\textbf{a}, Typical cases of edge cutting in $G(V,E)$. In each of cases (i) and (ii), the end node of edge $e(a,b)$ to be cut belongs to the shortest path of $(v_l,v_m)$. For case (i), the increment of distance of node pair $(v_l,v_m)$ equals to $d_{G-e}(a,b)-1$. For case (ii), it is smaller than $d_{G-e}(a,b)-1$.
\textbf{b}, Typical cases for adding a new edge in $G(V,E)$. Case (i): there are four shortest paths (in different colors to identify) between $(v_l,v_m)$, and both two endpoints of the new edge belong to the shortest path of $(v_l,v_m)$. Case (ii): there are two shortest paths (in different colors) between $(v_l,v_m)$, and one end node of the new edge belong to the shortest path of $(v_l,v_m)$. Case (iii): both nodes of the new edge are not in the shortest path of $(v_l,v_m)$.
\textbf{c}, Network compression evolution. 
It is a sequence of iterative rewiring operations including cutting an edge followed by adding a new edge. 
}
\label{fig:compre_evo}
\end{figure*}

In the final step of the graph evolution process, if $G_{N_{max}}^{cut}(V,E_{N_{max}}^{cut})$ is disconnected, then $G_{N_{max}}^{cut}$ contain only two connected components, i.e., 
\begin{align}
G_{N_{max}}^{cut}(V,E_{N_{max}}^{cut}) &= G_{N_{max},1}^{cut}(V_{N_{max},1},E_{N_{N_{max},1}}^{cut}) \nonumber\\
&+ G_{N_{max},2}^{cut}(V_{N_{max},2},E_{N_{N_{max},2}}^{cut}). 
\end{align}
If $a_{2(N_{max}-1)}$ and $b$ belong to the same connected component, without loss of generality, 
let $\{a_{2(N_{max}-1)},b\}\subset V_{N_{max},2}$, then the connectivity of evolution network can hold though the following additional evolution steps:
\begin{align*}
& \underbrace{G_{N_{max}}^{cut}(V,E_{N_{max}}^{cut})%
\xrightarrow%
{\mathcal
T_{add}}%
G_{N_{max}}^{add}(V,E_{N_{max}}^{add})}_{\substack{ \text{ step $N_{max}$ }}}
\nonumber\\
& \underbrace{\xrightarrow{\mathcal T_{cut}}G_{N_{max}+1}^{cut}(V,E_{N_{max}+1}^{cut})%
\xrightarrow{\mathcal T_{add}}%
G_{N_{max}+1}^{add}(V,E_{N_{max}+1}^{add})}_{\substack{ \text{ step $%
N_{max}+1$ }}}
\end{align*}%
where for 
\begin{align*}
&    \text{Step~}N_{max}:
    \left\{%
    \begin{array}{@{}r@{~}l@{}}
        \mathcal T_{add}: &\mathcal
T_{add}(a_{2(N_{max}-1)},a_{2(N_{max}-1)+1}),
    \end{array}
    \right.%
    %
    \\
&    \text{Step~}N_{max}+1:
    \left\{\begin{array}{@{}r@{~}l@{}}
        \mathcal T_{cut}: &\mathcal T_{cut}(a_{2(N_{max}-1)+1}, a_{2N_{max}}),\\[4pt]
        \mathcal T_{add}: &\mathcal
T_{add}(a_{2N_{max}}, b).
    \end{array}
    \right.
\end{align*}
In above steps, set 
\begin{flalign*}
&a_{2(N_{max}-1)+1} =
\mathop{\arg \max}_{v \in V_{a_{2(N_{max}-1)+1}}^A} 
B_{G_{N_{max}}^{add}}(e(a_{2(N_{max}-1)},v))\\ 
&\hspace{3em}\times d_{G_{N_{max}^{cut}}}(a_{2(N_{max}-1)}, v),\\
&a_{2N_{max}} =
\mathop{\arg \min}_{v \in V_{a_{2N_{max}}}^A} 
B_{G_{N_{max}}^{add}}(e(a_{2(N_{max}-1)+1},v))\\
&\hspace{3em}\times d_{G_{N_{max}+1}^{cut}}(a_{2(N_{max}-1)+1},v),
\end{flalign*}
where 
\begin{align*}
\begin{cases}
V_{a_{2(N_{max}-1)+1}}^A = V_{N_{max},1}-\{a_{2(N_{max}-2)+1}\},\\
V_{a_{2N_{max}}}^A = V_{a_{2(N_{max}-1)+1}}-\{a_{2(N_{max}-1)}\},
\end{cases}
\end{align*}
$V_{a_{2(N_{max}-1)+1}}$ is the set of the neighbor nodes of node $a_{2(N_{max}-1)+1}$ in $G_{N_{max}}^{add}(V,E_{N_{max}}^{add})$. 

The overall compression evolution process of network topology are illustrated in Fig.~\ref{fig:compre_evo}c). We refer to the product of the distance between node pairs and edge betweenness as the local compression modulus, denoted by $\psi$.

\subsection{Incorporating node constraints into network compression evolution}

The compression evolution described above is applicable to generic complex networks represented by connected simple graphs. In each evolution step, it removes an existing edge and adds a new edge merely based on maximal network compression. However, there exist many real complex networks, such as scale-free neighborhood area networks in smart grid~\cite{DLT2018}, in which node pairs cannot be arbitrarily connected to each other, e.g., due to a limited radio range of a wireless node. Considering node constraints, we have extended the the above general compression evolution to complex networks with node constraints.

Consider a connected simple graph $G(V,E)$. 
Assume that each vertex has some constraints or requirements characterized by a free variable or vector $\delta_i, i\in \{1, 2,\cdots, |V|\}$. Thus, the vertex set $V$ is represented by $V = \{v_1(\delta_1), v_2(\delta_2), \cdots, v_{|V|}(\delta_{|V|})\}$. 

\begin{definition}[Node constraint condition] Let $G(V,E)$ be a simple graph, where $E$ is the set of edges and $V = \{v_1(\delta_1), v_2(\delta_2), \cdots, v_{|V|}(\delta_{|V|})\}$ is the set of vertices with constraints characterized by a freedom value (or vector) $\delta_i, i=\{1, 2, \cdots, |V|\}$. For each node $v_i\in V$, define a constraint function $f_{v_i(\delta_i)}(v(\delta))$, which quantify the relationship between $v_i$ with other node $v$ by their freedom value (or vector). Then, for $i\in\{1,2,\cdots,|V|\}$, a constraint condition $f_{v_i(\delta_i)}(v(\delta))<c_i$ is defined for node $v_i$, where $c_i$ is a constant. 
\label{def:5}
\end{definition}%

For example, if $\delta_i$ of node $v_i$ is a two-dimension vector of the node coordinates, then the constraint for connectivity to node $v_j$ can be defined as
\begin{equation*}
    f_{v_i(\delta_i)}(v_j (\delta_j)) = \| \delta_i - \delta_j \|_2 
    \leq c_i,~v_i\neq v_j, v_i,v_j\in V. 
\end{equation*}

\begin{remark} 
For a vertex $v_i\in V$, selecting a vertex subset $V_i\subseteq V$ by applying the node constraint condition yields
\begin{equation*}
    V_i = \{v_j \mid v_j \in V, f_{v_i(\delta_i)}(v_j(\delta_j)) < c_i, j=1,2,\cdots,|V| \}.
    \label{eqn:Vi}
\end{equation*}
Then, search in $V_i$ rather than the entire $V$ for a node to add a new non-duplicate edge.
\label{rmk:7}
\end{remark}

\section{Algorithm design and complexity analysis}

\subsection{Algorithm for implementing network compression evolution}

For a given complex network $G(V,E)$, a top-level algorithm (\textbf{Algorithm 1}) for our compression evolution is as follows: 
\begin{enumerate}[leftmargin=\widthof{\hspace{\parindent}~Step 12.},labelwidth=\widthof{\hspace{\parindent}~Step 12.}]
\item[Step 1.] Calculate the total number of graph evolution iterations $N_{max}>1$ according to a given fraction $P_{rew}>1/|E|$. 

\item[Step 2.] Compute the edge betweenness of the network, select the initial edge $e=e(a,b)$ with the smallest edge betweenness as the edge to be removed to start the network evolution process. The node with smaller degree between $a$ and $b$ be set as the second end node of this edge. Assume that the selected node is $a$.

\item[Step 3.] If edge has not been selected for removal, set the second end node of $e$ as the first end node of the edge to be removed. All edges connecting to the end node are hypothetical edges for possible removal except for the edge added in previous step. Search the hypothetical edges with the smallest local compression modulus $\psi$ among all hypothetical edges to determine the edge $e$ to be removed. 

\item[Step 4.] Remove the edge $e$. 

\item[Step 5.] Set the second end node of the removal edge $e$ as the first end node of the edge to be added.

\item[Step 6.] Determine where to search a candidate node as the second end node of the new edge to be added. A candidate node should be distinct from the first end node of removal edge in previous step, the first end node of new edge and any of its adjacent nodes, but must be in the connected component of another end node of removal edge in previous step to ensure network connectivity. If the node constrained is needed for network evolution, determine a subset of the space determined above for search a candidate node by using \textbf{Algorithm 2} presented below.

\item[Step 7.] Search all nodes in the search space determined in Step 6. Select a node from the search space as the second end node of the new edge $e$ to be added such that the local compression modulus $\psi$ of the new edge on the entire graph is the largest among all hypothetical new edges. 

\item[Step 8.] Add the new edge $e$. 

\item[Step 9.] If the evolution has not reached $N_{max}-1$, repeat Steps 3 through 8.

\item[Step 10.] In the final evolution step ( $N_{max}$), in a similar manner to Steps 3 and 4, determine an edge $e$ to remove, and then remove it. 

\item[Step 11.] If $b$ is an end node of $e$, in a similar manner to Steps 5 through 7, determine a new edge $e$ to add in network. Then, similar to Steps 3 and 4, determine an edge to remove, and remove it. 

\item[Step 12.] Add the last edge to $b$. Evolution ends. 
\end{enumerate}

\noindent\textbf{Algorithm 2} to determine search space with node constraints:
\begin{enumerate}[leftmargin=\widthof{\hspace{\parindent}~Step 6.3.},labelwidth=\widthof{Step 6.3.~}]
\item[Step 6.1.] Assign the node free values (or vectors) as global parameters for network nodes to construct the node constraint conditions. The global parameters do not reassign once it be assigned in sequence evolution steps. 

\item[Step 6.2.] Construct the node constraint functions and constraint conditions.

\item[Step 6.3.] Select a node subset of search space determined in Step 6 through the node constraint condition as the new search space.
\end{enumerate}

\subsection{Time complexity of the algorithm}

Our compression evolution algorithm requires calculating the edge betweenness and the distances between node pairs in each step to determine which edge need to remove and which one to add. The fastest known algorithms for this task are the Brandes fast algorithm and the breadth-first search (BFS) algorithm, particularly suitable for unweighted networks. For a given unweighted graph $G(V,E)$, the computational complexities of calculating the distance between a node pair $(a,b)$ and the edge betweenness of $e \in E$ are $O(|E|)$ and $O(|V|\cdot |E|)$, respectively~\cite{BRAN2001}.


Consider $G(V,E)$ as a connected simple graph with $k(v)$ denoting the degree of node $v$ in $G$. In the first step of the graph evolution, we calculate the edge betweenness of all edges in $G(V,E)$ and select the minimal one as the initial edge $e(a,b)$ to remove. This sub-step requires $O(|V| |E|^2)$ times. Subsequently, computation is required for the distance and betweenness of $|V|-k(a)-1$ node pairs and edges, respectively. For a source node $a \in V$, BFS has the complexity $O(|E|)$ to obtain the distances to all other nodes. Computing these edge betweenness values requires $O((|V|-k(a)-1)|V||E|)$ times. 
For step $i, 1<i<N_{max}$,  $O(|E|+k(a_{2(i-2)+1}))$ times are needed to determine the edge $e(a_{2(i-2)+1}, a_{2(i-1)})$ to remove, and $O(|E|+(|V|-k(a_{2(i-1)})-1)|V||E|)$ times to determine the edge $e(a_{2(i-1)}, a_{2(i-1)+1})$ to add. In the final evolution step, only $O(|E|+k(a_{2(N_{max}-2)+1}))$ times are needed to determine the edge $e(a_{2(N_{max}-2)+1}, a_{2(N_{max}-1)})$ to remove. Finally, the edge is directly added to $b$. Therefore, the time complexity of our compression evolution algorithm is $O(|V|^2\cdot |E|^2)$.

\section{Experimental design and results}

\subsection{Model networks for validation and demonstration}

\textbf{Barab{\'a}si-Albert network.} 
The Barab{\'a}si-Albert (BA) model is a well-known network growth model used to generate a scale-free network by following the preferential attachment rule \cite{BA1999}. It can simulate real complex networks with a power-law degree distribution. Starting with a small initial connected network, new nodes are continuously added, preferentially connecting to existing nodes with a higher degree. The model has two parameters: the network size ($|V|$) and the number of nodes ($m$) to which each new node will attach. Specifically, a newly added node at a given time step will be connected to $m$ existing nodes based on a connection probability $P_i = \frac{k_i}{\sum_j k_j}$. This process is iterated until the predetermined network size is reached.

\textbf{Watts-Strogatz network.}  
The WS model is another well-known model used to simulate real complex networks with small-world characteristics \cite{WS1998}. The model has three parameters: the network size (${|V|}$), the initial node degree ($k$), and the edge rewiring probability ($p$). It starts with a $k$-nearest neighbors network with ${|V|}$ nodes, and then each edge is rewired with probability $p$ which is set as 0.5 in all corresponding experiments. 

\textbf{Erd{\" o}s-R{\'e}nyi network.}
The Erd{\" o}s-R{\'e}nyi (ER) network is a classical random network. Given an initial node set $V$, edges are randomly connected between any pair of nodes with a probability $p$, constructing a random graph $G(V, E)$ following the Erd{\"o}s–R{\'e}nyi model~\cite{Erd1959On}.

\textbf{Multi-populations network.}
Many real complex networks have a modular community structure. To simulate this type of network, we give a straightforward method to generate the multi-populations network ${G_{MP}(V,E)}$. Let ${N_S > 1}$ be the number of population modules in the multi-populations network. We can generate the ${N_S}$-populations network as follows: Firstly, generate ${N_S}$ small-world networks ${P_{s}^{M}(V_s^M,E_s^M)}$ for ${s=1,\cdots,N_S}$, each with the same size (${|V_1^M|}$) as a population module. Then, construct ${m}$ connections (where ${m}$ is a small number) between pairs of population modules to ensure the ${N_S}$-populations network is connected. Thus, ${|V| = N_S \times |V_1^M|}$ and ${|E|}$ is at least equal to ${\sum_{s=1}^{N_S}|E_s^M| + (N_S-1)m}$.

\subsection{More compact topology of BA networks compared to WS networks}

The average distances in small-world networks and scale-free networks differ in their mathematical expressions. The relationships between the network average distance ($\bar{D}$) and the network size ($|V|$) for small-world and scale-free networks~\cite{CH2003} are given by $\bar{D} \sim \ln |V|$ and $\bar{D} \sim \ln(\ln |V|)$, respectively. We tested these relationships through numerical experiments, as shown in Fig.~\ref{fig:S1}. It is clear from the mathematical expressions that scale-free networks have smaller average distances than small-world networks of the same size. Hence, scale-free networks can be termed ultra-small-world networks~\cite{CH2003}. This means that BA networks have a higher compact topology compared to WS networks. We also compared the relationship between network size and average distance for both BA and WS networks. In Fig.~\ref{fig:S2}, we see that the average distance in BA networks is smaller than in WS networks. The ultra-small-world of scale-free networks indicates that these networks are more challenging to compress.

\begin{figure}[htb!]
\centering  
\includegraphics[width=1\linewidth,trim=0 0 0 0,clip]{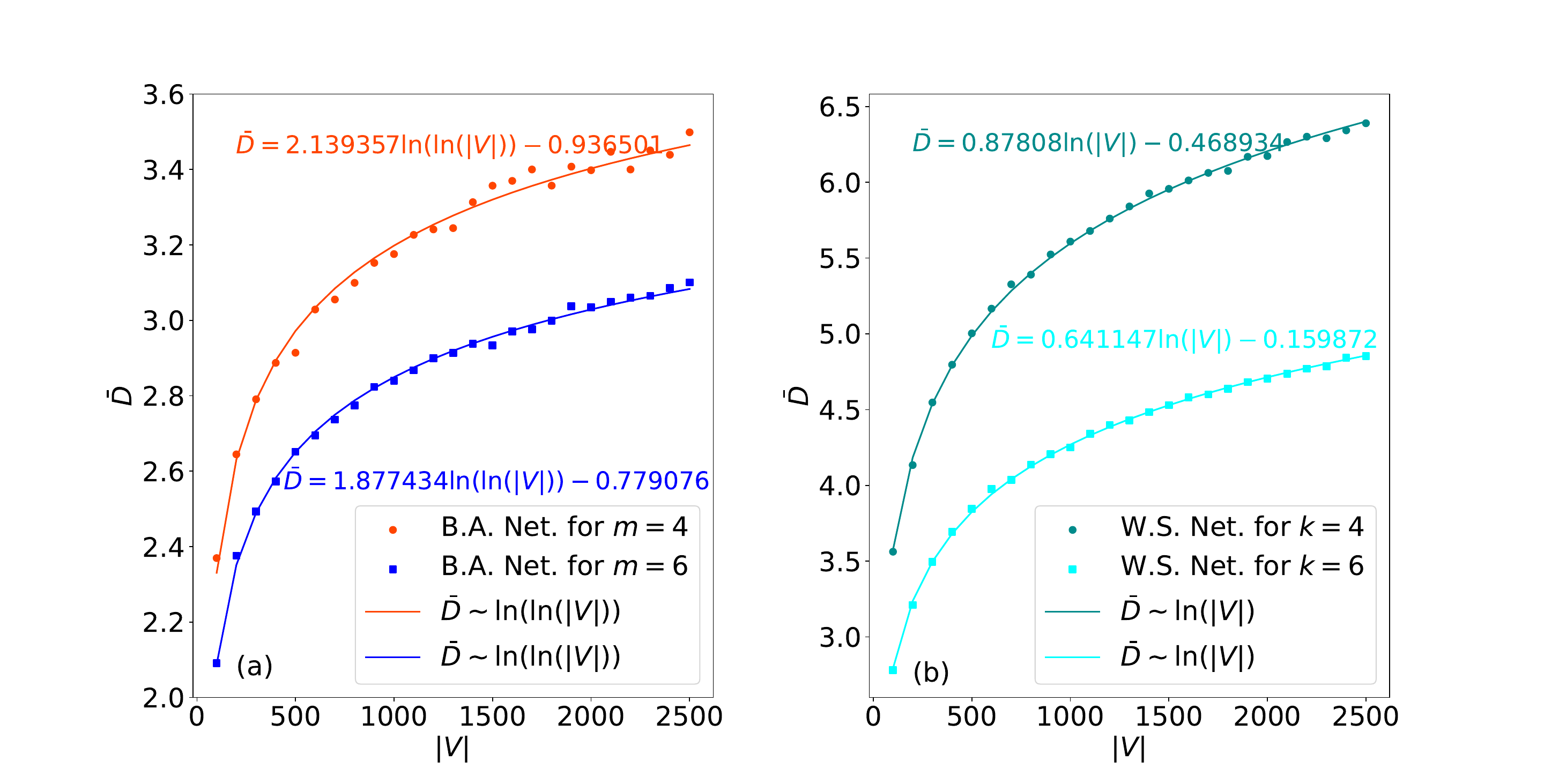}
\caption{The relationship fitting between the average distance and the network size for BA networks and WS networks. Panel (a) shows the double logarithmic relationship $\bar{D} \sim \ln (\ln |V|)$ for BA networks, and panel (b) shows the logarithmic relationship $\bar{D} \sim \ln |V|$ for WS networks. }
\label{fig:S1} 
\end{figure}

\begin{figure}[htb!]
\centering 
\includegraphics[width=1\linewidth,trim=10 5 50 45,clip]{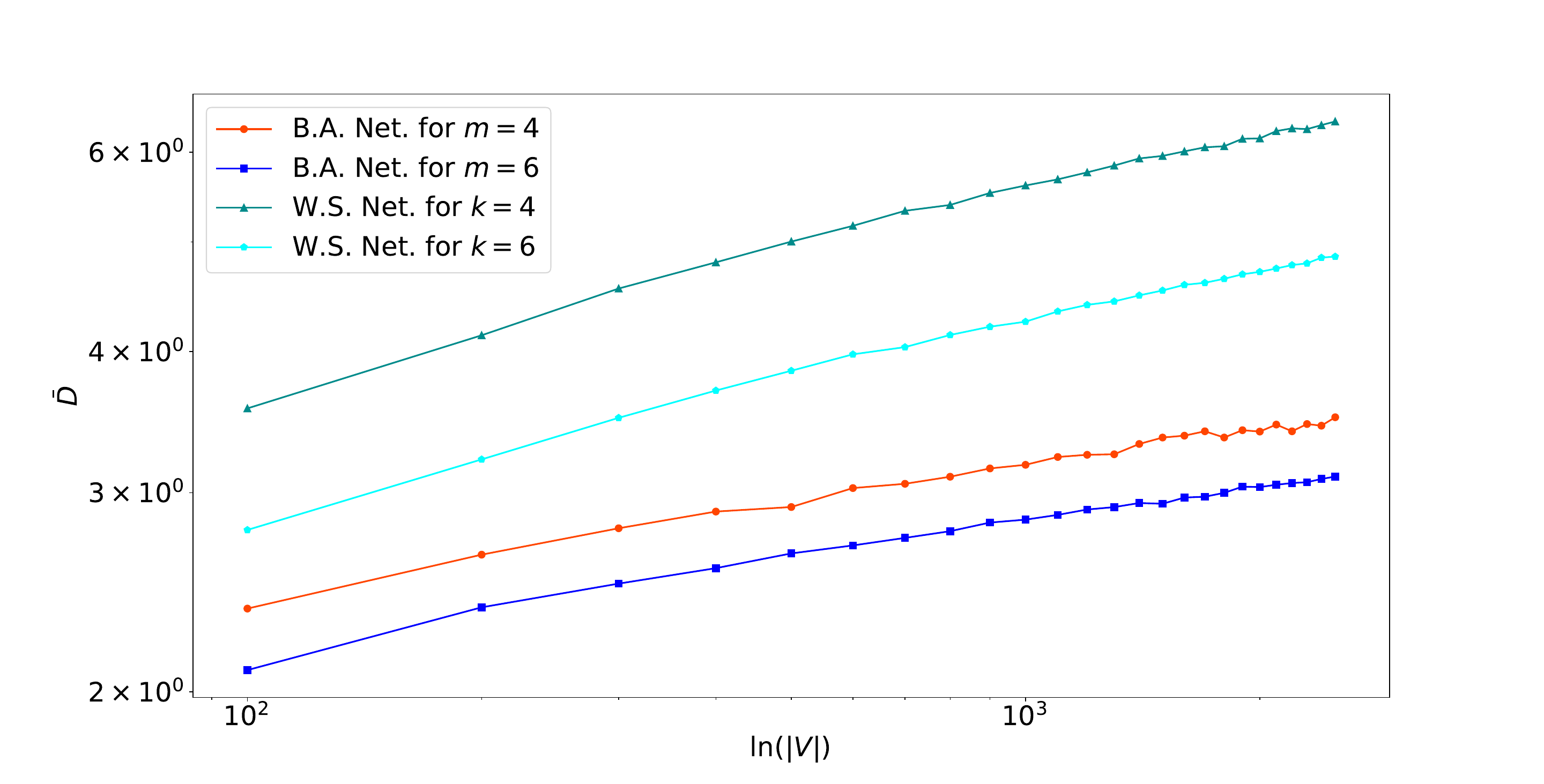}
\caption{The relationship between the average distance and the logarithm of the network size for BA networks and WS networks. }
\label{fig:S2}
\end{figure}

\subsection{Comparison between the effective compression evolution and the random compression}

The effective compression evolution we have designed is expected to be an optimal approach for achieving network tightness through topology compression. In other words, the network produced by the compression process is expected to approximate the limit compression network more closely. To evaluate this, we compare the performance of the effective compression with the method in Ref.~\cite{LYA2023} in various types of networks. 

We show that the effective compression evolution offers outstanding performance in compressing a network to make it more compact. Firstly, for scale-free networks with ultra-small-world characteristics, the compression approach can reduce the average distance, which means that the network is compressed. In stark contrast, the method in Ref.~\cite{LYA2023} fails to facilitate the compression of scale-free networks. Secondly, the compression approach possesses an optimized performance for compressing networks compared to the method in Ref.~\cite{LYA2023} for other types of networks, such as small-world networks. For the BA network alone, the method in Ref.~\cite{LYA2023} cannot represent the topology compression performance. As its verification, we conducted experiments on various types of networks (the topologies of these networks can be seen in Fig.~\ref{fig:S3}).

\begin{figure}[htb!]
\centering  

\includegraphics[width=1\linewidth,trim=125 50 45 60,clip]{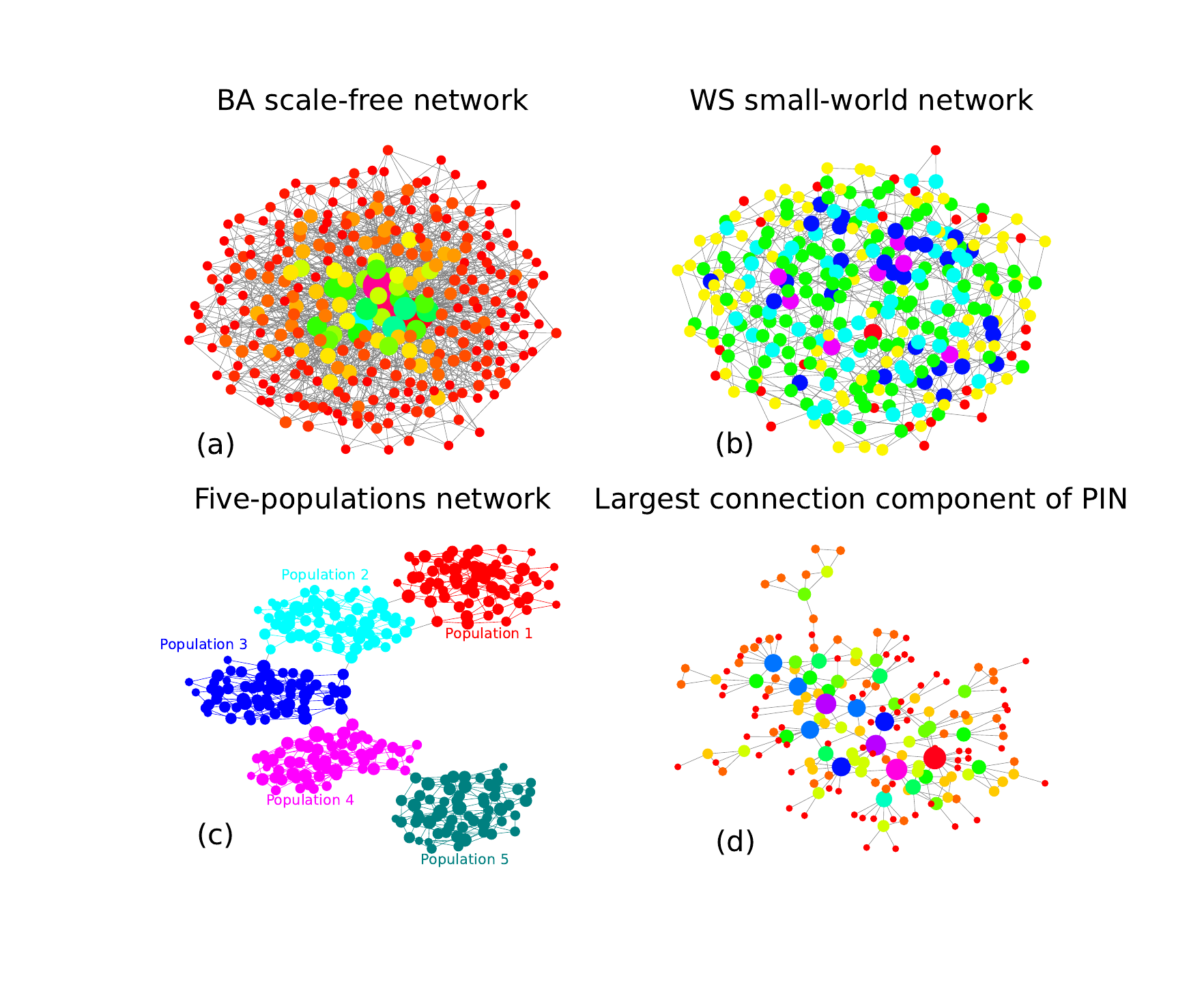}
\caption{The topology of some experimental networks. Panels (a), (b), and (c) show the BA network, the WS network, and the five-populations network with the same network size (300 nodes). Panel (d) shows the largest connected component of a protein interaction network with 175 nodes.}
\label{fig:S3} 
\end{figure}

$(i)$ Setting the parameters as ${|V| = 300}$ and ${m=4}$, we generated a scale-free network using the BA network model. We reshaped the network using the compression approach and the method in Ref.~\cite{LYA2023}, respectively. We then computed the average distance  for all evolved networks. Figure~\ref{fig:S4} illustrates the excellent performance of the compression evolution approach to compress the scale-free network. We observed that applying the compression evolution approach to compress the original network can rapidly reduce the average distance. This means that the topology of the network is rapidly compressed. However, as seen in Fig.~\ref{fig:S4}, instead of 
tightening network, the method in Ref.~\cite{LYA2023} actually makes it worse. This is because BA networks are ultra-small-world networks whose average distances are much smaller than those of WS small-world networks. The topology of BA network is extremely compact and the network is hard to compress, as shown in Fig.~\ref{fig:S4}. Compressing the topology for ultra-compact network, like the ultra-small-world networks, using ordinary compression methods is challenging.

\begin{figure}[htb!]
\centering  

\includegraphics[width=1\linewidth,trim=20 5 60 45,clip]{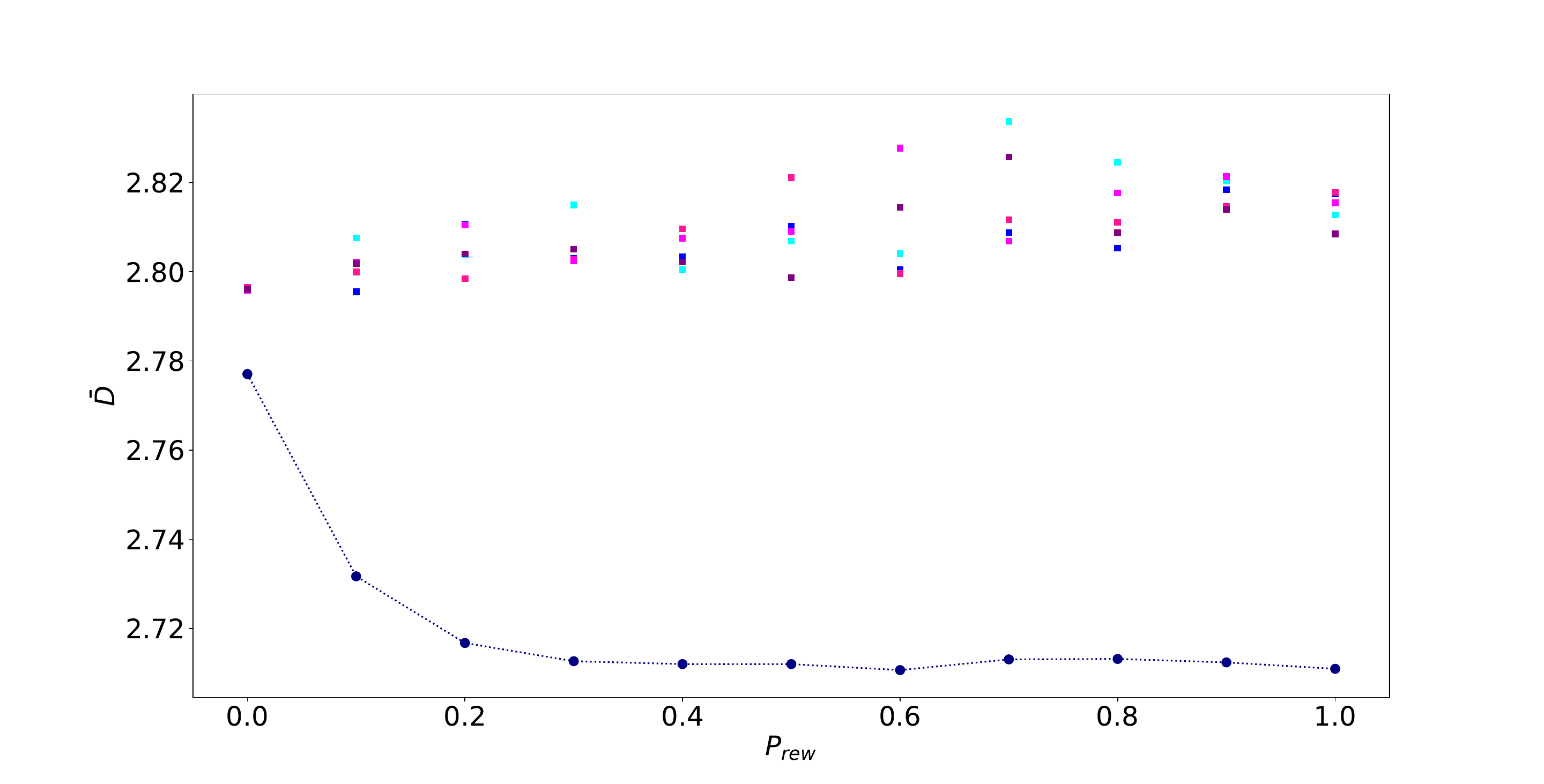}
\caption{The relationship between $\bar{D}$ and the evolution fraction ($P_{rew}$) for BA network. Dotted line represents the simulation results of the compression evolution methods. Scatter plots indicate the results of ten simulations of the method in Ref.~\cite{LYA2023}, with each set of results represented by differently colored markers. } 
\label{fig:S4}
\end{figure}

$(ii)$ We conducted experiments with WS networks with 300 nodes, similarly compressing the WS network. The simulation results are shown in Fig.~\ref{fig:S5}. From Fig.~\ref{fig:S5}, we see that both methods can reduce the average distance of the small-world network. However, the compression evolution approach has a greater performance over the method in Ref.~\cite{LYA2023} to compress the small-world network.

\begin{figure}[htb!]
\centering  

\includegraphics[width=1\linewidth,trim=20 4 60 45,clip]{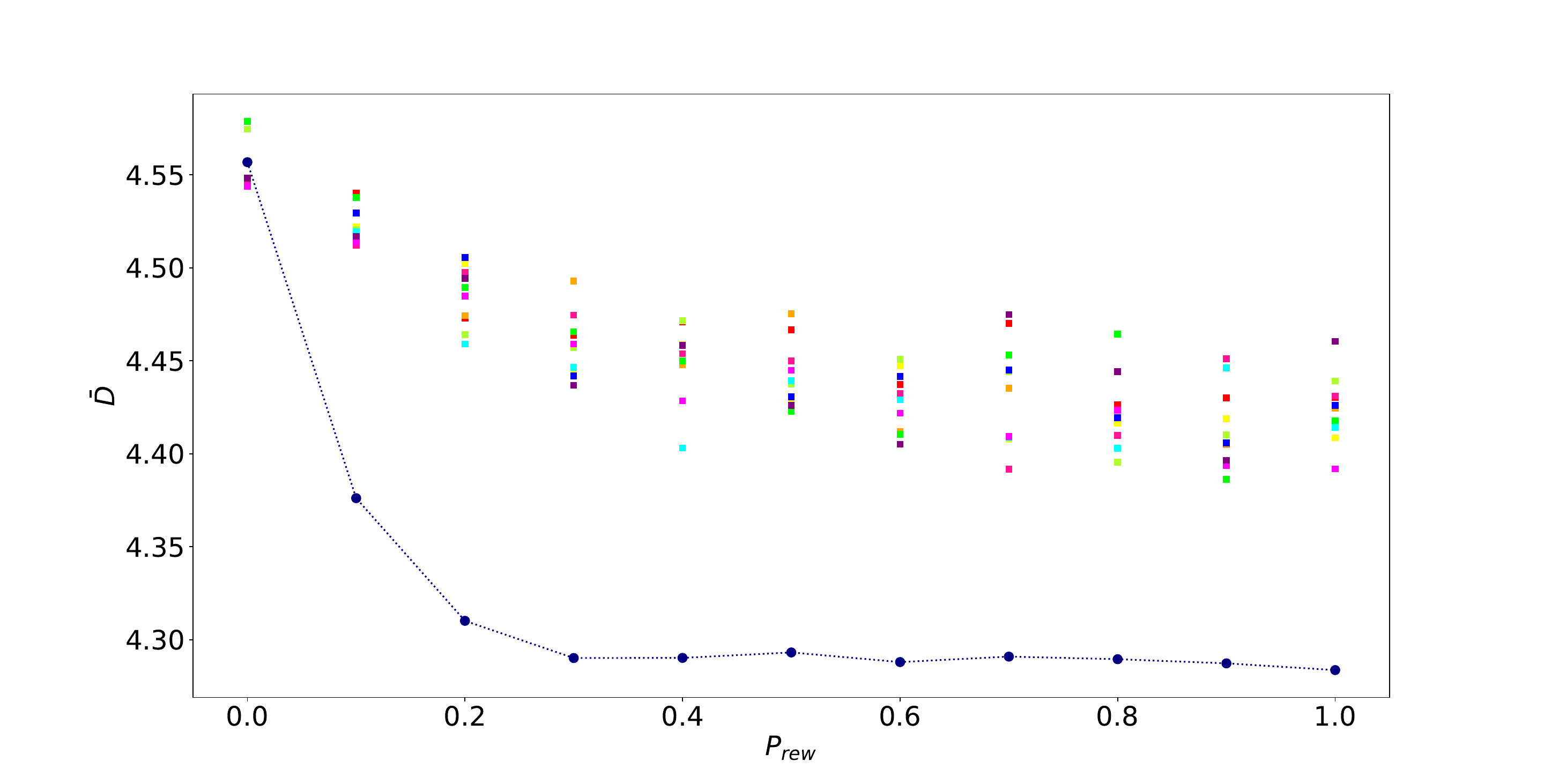}
\caption{The relationship between $\bar{D}$ and $P_{rew}$ for the WS network. Scatters and line are described similarly to Fig.~\ref{fig:S4}.  } 
\label{fig:S5} 
\end{figure}

$(iii)$ We generated a five-populations network for experimental simulations using the above-mentioned generation method for multi-populations network. We first generated five small-world networks of size 60 as population modules and then established two edges between pairs of modules to connect the population network. The topology of the five-populations network is shown in panel (c) of Fig.~\ref{fig:S3}. Figure~\ref{fig:S6} illustrates that both methods can compress the five-populations network topology. However, the compression evolution approach behaves better compression performance.

\begin{figure}[htb!]
\centering  

\includegraphics[width=1\linewidth,trim=20 5 60 45,clip]{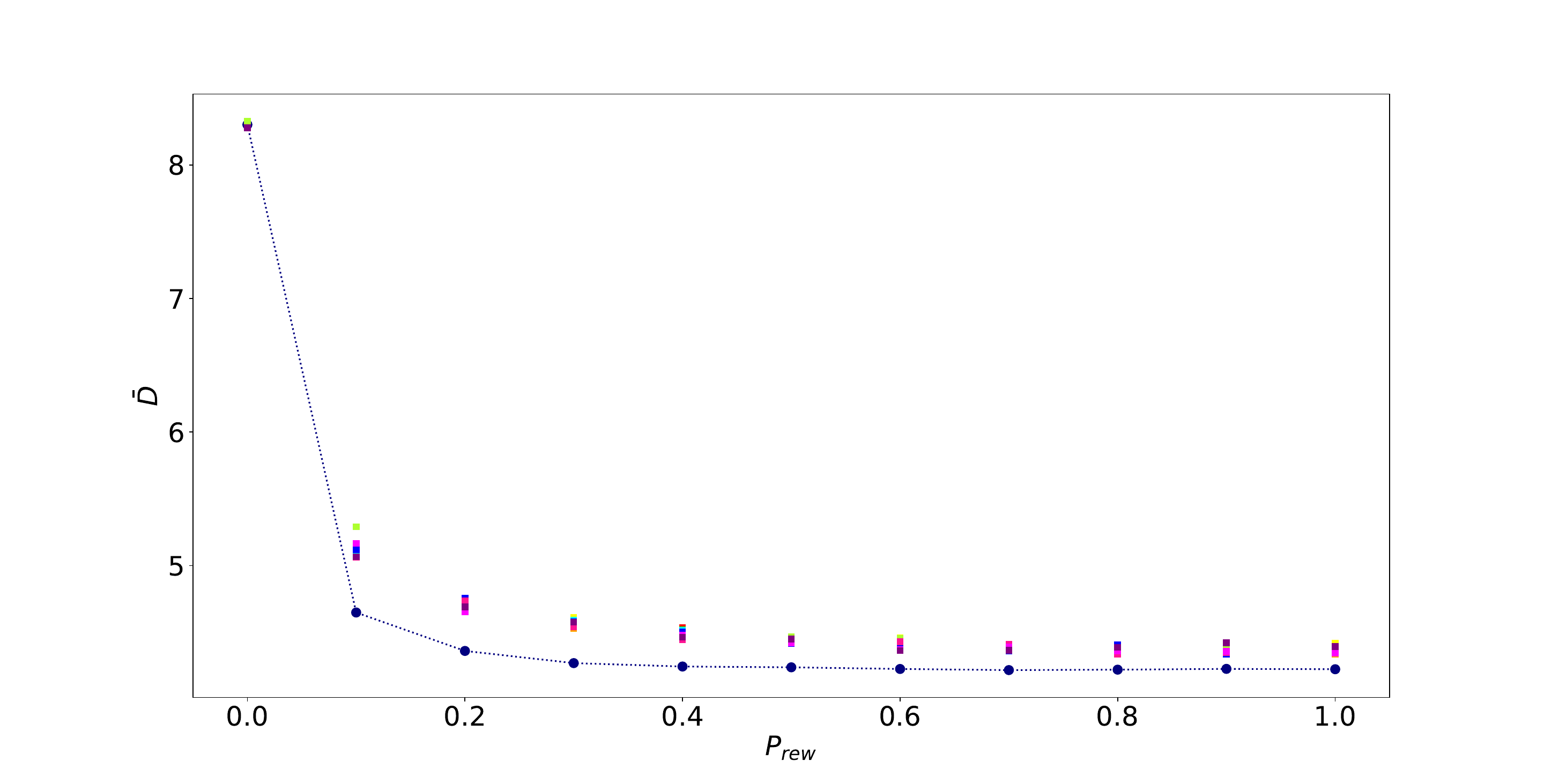}
\caption{The relationship between $\bar{D}$ and $P_{rew}$ for the five-populations network. Scatters and line are described similarly to Fig.~\ref{fig:S4}. }
\label{fig:S6} 
\end{figure}

$(iv)$ As an example of a real-world network, we consider a giant connected component of a protein interaction network (PIN) \cite{BLM2006}. The data can be downloaded with permission from the website \url{http://www.soc.duke.edu/~jmoody77/Prot/index.htm}. We chose the saccharomyces cerevisiae (SC) dataset; the largest connected component of the PIN has 175 nodes (the network topology is shown in panel (d) of Fig.~\ref{fig:S3}). Figure~\ref{fig:S7} illustrates the corresponding experimental results. Both methods can compress the PIN topology. Similarly, the compression evolution approach exhibits superiority.

\begin{figure}[htb!]
\centering 

\includegraphics[width=1\linewidth,trim=20 5 60 45,clip]{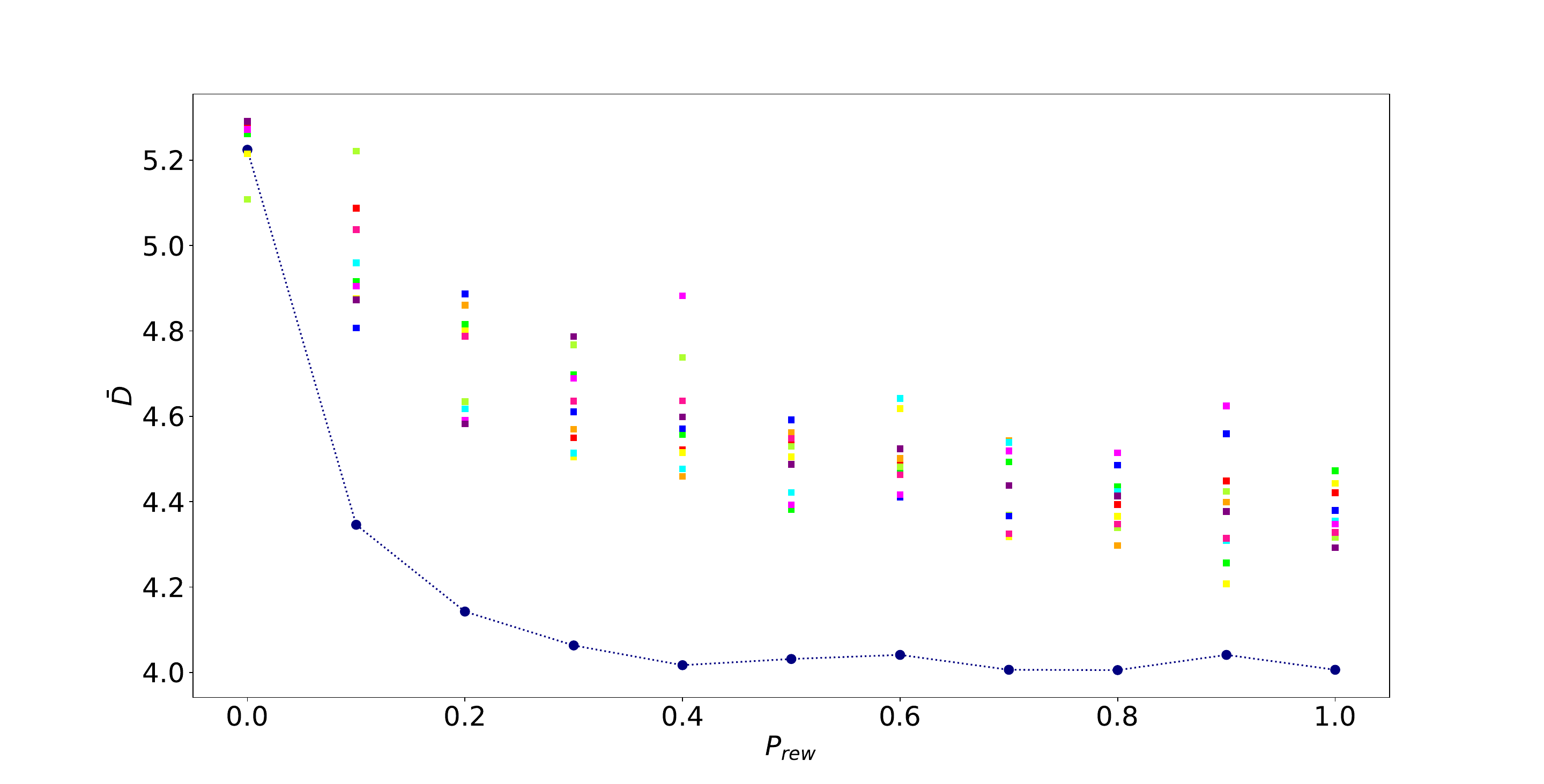}
\caption{The relationship between $\bar{D}$ and $P_{rew}$ for PIN. Scatters and line are described similarly to Fig.~\ref{fig:S4}. } 
\label{fig:S7} 
\end{figure}

\subsection{Effective topological compression of synthetic networks}

We apply the effective compression approach to several widely used synthetic networks, namely the WS, BA, and ER networks. For the ER network, it has the following expected relationships with respect to connection probability $p$,  
\begin{equation*}
|E| = \frac{|V|(|V|-1)p}{2}, \quad \bar{k} = (|V|-1)p, \quad p = \frac{\bar{k}}{(|V|-1)}.  
\label{eqn:er_rela_p}\\
\end{equation*}
For a fair comparison, the connection probability $p$ in the ER model is set to $k/(|V|-1)$, ensuring that the average degree aligns closely with that of the WS networks.

The results for these synthetic networks are shown in Fig~\ref{fig:S8}. We show that the extent to which these networks can be compressed is small. Among the studied networks, the extent of topological compression decreases in the following order: WS network, followed by the ER network, and the lowest is observed in the BA network. In addition, it is shown in Fig~\ref{fig:S8} that these networks essentially approach its compressional limit when the $P_{rew}$ reaches $0.3$.

\begin{figure}[htb!]
\centering 
\includegraphics[width=1\linewidth,trim=74 550 20 52,clip]{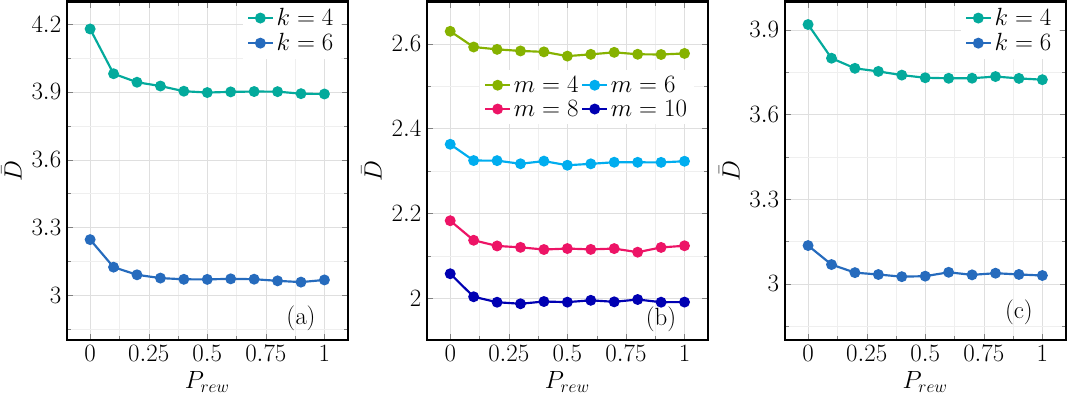}
\caption{
Validation of compression evolution on synthetic networks. 
Panels (a), (b) and (c) show the compression result of WS, BA and ER networks, respectively.  
} 
\label{fig:S8} 
\end{figure}

\subsection{Evaluation of clustering effect for network compression}

For the WS model, as is well known, it is observed that an increase in the edge rewiring probability $P_{rew}$ leads to a decrease in the average clustering coefficient $C$ of the generated network. By setting a smaller $P_{rew}$ for the initial nearest-neighbor network, a small-world network  can be obtained. Consequently, the WS network exhibits a higher value of $C$. However, increasing the rewiring probability $P_{rew}$ results in a decrease in $C$, as the network transitions toward a more random structure.

We perform topological compression evolution on the BA networks, WS networks and ER networks. All experimental results are shown in Fig.~\ref{fig:S10}~(a). From Fig.~\ref{fig:S10}~(a), we also observed the decrease in $C$ as $P_{rew}$ increased. Therefore, both the network topological compression and the $\bar{D}$-reducing transformation, such as the WS model, result in a reduction of  $C$.

\begin{figure}[htb!]
\centering 
\includegraphics[width=1\linewidth,trim=80 30 80 50,clip]{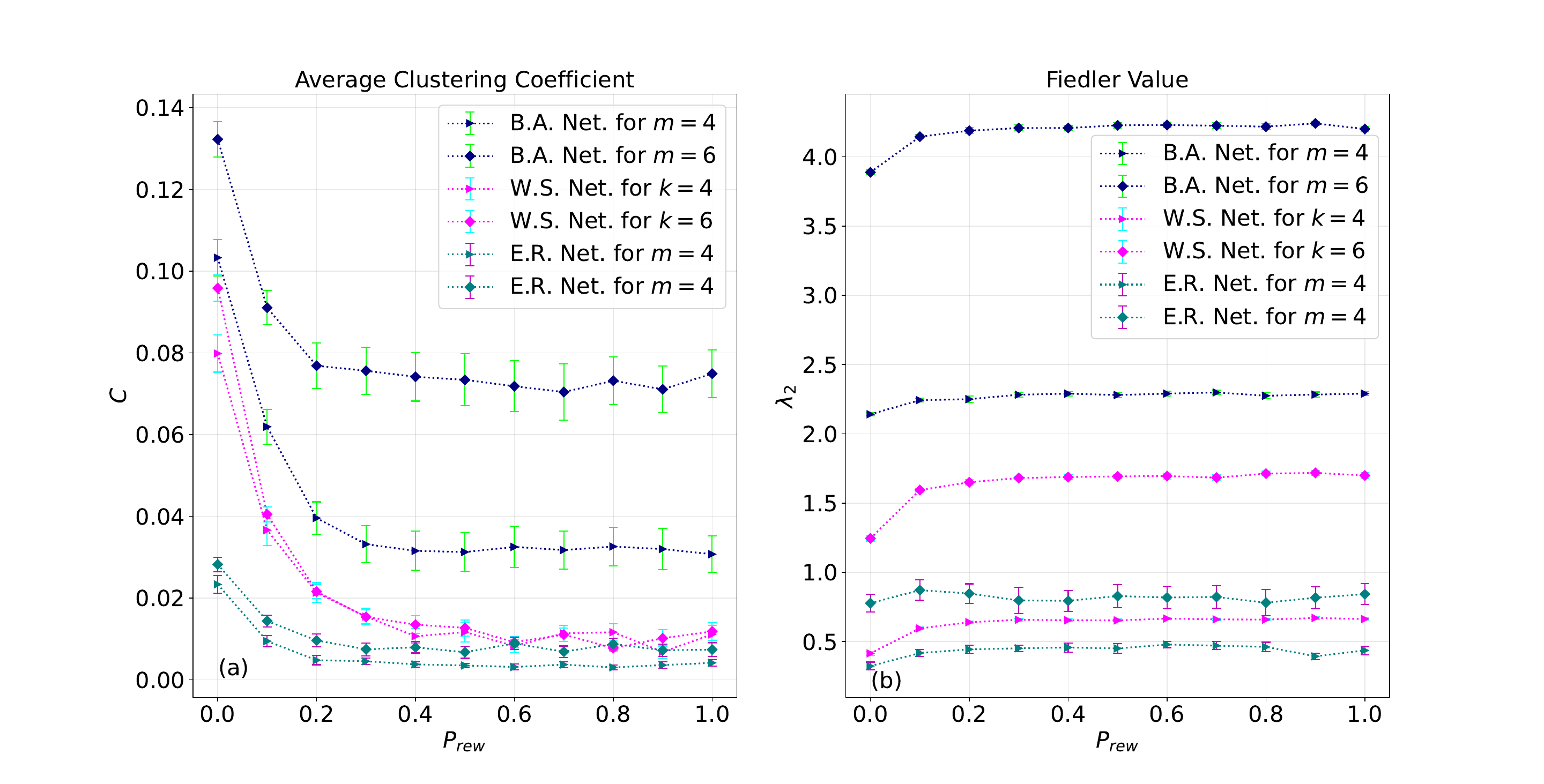}
\caption{Relationship between the average cluster coefficient $C$ and network Fiedler value $\lambda_2$ with $P_{rew}$ for three types of synthetic networks: BA, WS and ER networks.}   
\label{fig:S10} 
\end{figure}

\subsection{Topological compression affects network dynamical properties}

To investigate the effect of network compression on the dynamical properties of the network, we examined variations in the Fiedler values (the real part of the smallest non-zero eigenvalue of network Laplacian matrix) for the BA network, WS network, and ER network. The Fiedler value can reflect the synchronizability~\cite{li2021synchronizability} and diffusion rate~\cite{jiang2024fiedler} of the network. A larger Fiedler value indicates a greater synchronizability for the network. Similarly, the larger the Fiedler value, the larger the diffusion rate of the network.

We compressed all experimental networks by our compression approach and calculated the Fiedler values of these networks, and the experimental results are shown in Fig.~\ref{fig:S10}. Figure~\ref{fig:S10}~(b) shows that the larger the compression fraction $P_{rew}$ of the network, the stronger its synchronizability. Similarly, a larger compression fraction leads to a larger $\lambda_2$ value, indicating a shorter average diffusion time (i.e., higher diffusion rate) for the network.

\section{Conclusion and discussion}

The topological compression of a complex network refers to the process of reshaping its topology such that the average distance of the network decreases, implying that the network becomes more compact. Furthermore, the compression evolution of a complex network does not alter its connectivity and degree distribution. This ensures that network compression process does not lead a connected network to a few connected components. Any process that reshapes network topology under these conditions  is a topological compression of the network. Thus, random rewiring process for some networks is also a network topological compression although it is much less effective than our compression evolution approach.

The topological compression of complex networks is a crucial yet unexplored topic. The topological compressibility, which quantifies how much a network can be further compressed, is an inherent and general property of complex networks. It indicates the level of difficultly to  tighten a network. It also indicates the difference between the original network and its compressible limit.

This work is devoted to investigating an effective compression process, which can be used to characterize the topological compressibility of complex networks. It optimizes the topology of complex networks to make it as compact as possible, hence modeling a general topology compression evolution for networks. A practical value of compact network topology is that a tighter topology (smaller average distance) of a complex network represents a better network structure from the transformation efficiency perspective~\cite{AMS2018}. 

Through rigorous analysis of the structure of complex networks, we have established a theory about upper and lower bounds of network average distance variations resulting from edge removal and addition. Based on this theory, we developed a targeted topological compression evolution process. 

The evolution process has shown significant advantages to compress networks over random rewiring in our testing of general complex networks such as scale-free and small-world networks. For scale-free networks, which are ultra-lower compressible networks, random rewiring is unable to tighten network topology. By contrast, our approach clearly shows a compression of network average distance.

Our compression evolution of networks can adapt to node-constrained networks. Node constraints exist in many real-world networks, e.g., limited radio range in wireless networks as in SF-NANs. They can also be artificially introduced to simplify the computation required for network evolution. Incorporating node constraints into the network evolution, we search a node in a subset of network nodes, which satisfy the node constraint condition, in each requiring step for edge addition. As the search space is much reduced, the computational efficiency is significantly improved in the compression evolution algorithm. 

Given the practical value of compact network topology, we believe that our theory and compression evolution approach provide insights into the design, construction, and optimization of network topology~\cite{noerr2023optimal} or modeling the dynamics of real networks. The following are several examples of designable communication networks that possess potential applications:

(i) Synthetic communications between mammalian cells have been achieved though synthetic receptors~\cite{RWM2016, JOK2018}. It is possible to construct synthetic communication networks between cells in general biological systems~\cite{KSS2022,WDF2007,WCK2008,BLE2012,MBM2022}, and to design specialized cellular communication networks with customized functionalities. New findings in these fields would be significant for biomedical research. 

(ii) Recently, reports have been found on designing synthetic communication networks between artificial cells~\cite{AMG2017}. But this has been less explored~\cite{BEV2020}. 

(iii) Designing gut microbiota and modulating microbial interactions~\cite{GWD2021} is considered as potential new therapies for treating intestinal disorders.

(iv) The brain neuron network is a giant communication network with its network structure being optimized continuously and autonomously for enhanced communication efficiency during the evolution of the species~\cite{AMS2018,LHP2024}. It has been shown that signaling through multiple pathways appears to be a more appropriate communication model for brain networks~\cite{BDK2017}, making the brain more robust and resilient to brain injuries~\cite{KMA2007}. 

(v) Some engineering communication network include Configurable Internet of Things (IoT) networks~\cite{2023Modeling}, Next-generation 5G and 6G wireless communication networks~\cite{yeh2023perspectives,LDY2025}, Quantum communication networks~\cite{cirigliano2024optimal}.

In conclusion, we have developed an effective compression evolution approach to compress network topology as compact as possible. In essence, our approach holds promise for applications across diverse real-world contexts~\cite{AMS2018,noerr2023optimal,RWM2016,JOK2018,KSS2022,WDF2007,WCK2008,BLE2012,MBM2022,AMG2017,BEV2020,GWD2021,BDK2017,KMA2007,2023Modeling,yeh2023perspectives,cirigliano2024optimal}. Our findings also serve as a wellspring of inspiration for tailoring node constraints to optimize network structure and performance, offering useful guidance in this realm, which will be explored in our future work.

\section*{Acknowledgments}

The authors would like to acknowledge Dr Yueming Ding  for discussions on scale-free communication networks of smart grid in engineering. This work was supported in part by the National Natural Science Foundation of China under grant 12371088, and in part by the Australian Research Council (ARC) through the Discovery Projects scheme under grant DP220100580.

\bibliographystyle{unsrt}
\bibliography{Compre_Evo_manu.bib}

\end{document}